\newtheorem{thm}{Theorem} 
\newtheorem{prop}{Proposition}[section]
\newtheorem{cor}[prop] {Corollary} 
\newtheorem{rem}[prop]{Remark} 
\newtheorem{lem}[prop]{Lemma} 
\newtheorem{que}[prop]{Question} 
\newtheorem{defn}[prop]{Definition}
\newtheorem{que-ed}{Question-editing}
\begin{document}
\title[Spectral Invariants]{Descent and $C^0$-rigidity of spectral invariants on monotone symplectic manifolds}

\author[Sobhan Seyfaddini]{Sobhan Seyfaddini}

\address{University of California Berkeley\\ Berkeley, CA 94720 \\USA}
\email{sobhan@math.berkeley.edu}


\begin{abstract}

\noindent  Suppose that $(M^{2n}, \omega)$ is a closed, monotone symplectic manifold with $[\omega]|_{\pi_2(M)} = \lambda c_1|_{\pi_2(M)}$.  We show that if two Hamiltonians $H$ and $G$ vanish on a non-empty open set $U$ and the $C^0$ distance between $\phi^1_H$ and $\phi^1_G$ is sufficiently small, then the spectral invariants of $H$ and $G$ satisfy the following inequality:
 $$|c(a,G) - c(a,H)| \leq C \, d_{C^0}(\phi^1_G, \phi^1_H) + n \max(0, \lambda).$$
 As a corollary we obtain that spectral invariants descend from $\widetilde{ Ham_c}(M \setminus U)$ to $Ham_c(M\setminus U)$.
 
  We apply the above results to Hofer geometry and establish unboundedness of the Hofer diameter of $Ham_c(M\setminus U)$ for stably displaceable $U$.  We also answer a question of F. Le Roux about $C^0$-continuity properties of the Hofer metric.  
\end{abstract}

\maketitle

\section{Introduction and Main Results}
   Throughout this paper $(M, \omega)$ denotes a closed and connected symplectic manifold.   Any smooth Hamiltonian $H:[0,1] \times M \rightarrow \mathbb{R}$ induces a Hamiltonian path, or flow, $\phi^t_H : M \rightarrow M \text{ } (0\leq t \leq 1)$, by integrating the unique time-dependent vector field $X_H$ satisfying $dH_t = \iota_{X_H}\omega$, where $H_t(x) = H(t,x)$.  We denote the space of Hamiltonian paths by $PHam(M, \omega)$.  A Hamiltonian diffeomorphism is by definition any diffeomorphism obtained as the time-1 map of a Hamiltonian flow.  We denote by $Ham(M, \omega)$ the group of all Hamiltonian diffeomorphisms and by $\widetilde{Ham}(M, \omega)$ its universal cover.  We will eliminate the symplectic form $\omega$ from the above notations unless there is a possibility of confusion. 
   
    We equip $M$ with a distance $d$ induced by any Riemannian metric.  The $C^0$-topology on $Diff(M)$, the space of diffeomorphisms of $M$, is the topology induced by the distance $ \displaystyle  d_{C^0}(\phi, \psi):= \max_{x}d(x, \phi^{-1}\psi(x))$.  Please note that our definition yields a left-invariant metric; this property will be used in the proof of Theorem \ref{Main Theorem}.  Similarly, for paths of diffeomorphisms $\phi^t, \psi^t$ ($t\in [0,1]$) we define their $C^0$-distance by the expression $\displaystyle d_{C^0}^{path}(\phi^t, \psi^t):= \max_{t,x} d(x, (\phi^t)^{-1}\psi^t(x)).$    
   
  Each Hamiltonian $H \in C^{\infty}([0,1] \times M)$ has a set of spectral invariants
  $$\{c(a, H) \in \mathbb{R}: a \in QH^*(M) \setminus \{0\} \}.$$
  Spectral invariants were introduced by Viterbo in \cite{Vit92}.  In the form considered in this paper, they were constructed by Schwarz \cite{Sc00} on closed aspherical manifolds, and by Oh \cite{Oh05b} on all closed symplectic manifolds.  Simply stated,  $c(a, H)$ is the action level at which $a \in QH^*(M)$ appears in the Floer homology of $H$.  These invariants have been studied extensively and have had many interesting applications in symplectic geometry; see \cite{EP03, Gi10, Oh05a, Sc00}. Their construction and basic properties will be reviewed in Section \ref{Spectral Invariants}.  
  
  In \cite{Se11}, we obtained an estimate relating the difference between spectral invariants of two Hamiltonians to the $C^0$-distance of their flows.  In this article, we continue our research in this direction and obtain similar estimates relating the difference between spectral invariants of two Hamiltonians to the $C^0$-distance between \emph{the time$-1$ maps of their flows.}
  In Sections \ref{Estimates for spectral invariants}, \ref{Descent of spectral invariants}, \ref{Applications to Hofer geometry}, and \ref{Non-closed manifolds} we introduce our main results.  Spectral invariants and much of the notation of the paper are introduced in Section \ref{Spectral Invariants}.  Sections \ref{Proof of Main Theorem} and \ref{Proofs of applications to Hofer geometry} contain proofs of our results.  In Section \ref{Extension} we show that our results extend to some classes of compact symplectic manifolds with boundary.

  {\bf Acknowledgments:} The research that led to this article was initiated at the Institute for Advanced Study during the program on Symplectic Dynamics.  I would like to thank Helmut Hofer for his invitation, and the institute for its warm hospitality.  My trip to IAS would not have been possible without the support of my advisor, Alan Weinstein.  I want to thank him for this and for his helpful guidance over the past several years.  I would like to thank Michael Entov for several email communications and for drawing my attention to important examples.  I am very grateful to Leonid Polterovich for inviting me to the University of Chicago where I had the opportunity to present and discuss parts of this work with him, Strom Borman, Frol Zapolsky, and Lev Buhovsky.  I thank the four of them as well as Denis Auroux, Beno\^it Jubin, Michael Khanevsky, Fr\'ed\'eric Le Roux, and Dusa McDuff for helpful discussions and suggestions.  
  
  In an earlier version of this article, Theorems \ref{Th: infiniteness of diameter} and \ref{Th: Le Roux's question} were stated for displaceable sets.  Strom Borman pointed out to me that, in fact, those statements hold for stably displaceable sets.  He also, very kindly, provided me with a proof for Lemma \ref{Strom's Lemma}.  I am very thankful to him for all his help.
  
  \subsection{$C^0$-rigidity of spectral invariants} \label{Estimates for spectral invariants}
  Throughout this article, we assume that $U$ is a non-empty open subset of $M$.  Denote by $C^{\infty}_c([0,1] \times (M \setminus U))$ the set of smooth Hamiltonians with support compactly contained in $M \setminus U$.  We define $Ham_c(M \setminus U) = \{\phi^1_H : H \in C^{\infty}_c([0,1] \times (M \setminus U)) \}$.  Suppose $H, G \in C^{\infty}_c([0,1] \times (M \setminus U))$.  In \cite{Se11}, we showed that there exist constants $C, \delta >0$, depending on $U$, such that if $d_{C^0}^{path}(\phi^t_G, \phi^t_H) < \delta$, then 
  \begin{equation} \label{estimate for paths}   
      |c(a,G) - c(a,H)| \leq C \, d_{C^0}^{path}(\phi^t_G, \phi^t_H).
   \end{equation}
   
  It is well known that spectral invariants of a properly normalized Hamiltonian $H$ depend on the homotopy class (rel. endpoints) of $\phi^t_H, \; t \in [0,1]$.  Thus, it is not surprising that the above estimate depends on the $C^0$ distance between the entire paths $\phi^t_G, \; \phi^t_H$ and not just $d_{C^0}(\phi^1_G, \phi^1_H)$.  The main goal of this paper is to show that on monotone symplectic manifolds there exist estimates which are similar to (\ref{estimate for paths}), but depend only on the endpoints $\phi^1_G, \phi^1_H$.  A symplectic manifold is called monotone if $[\omega]|_{\pi_2(M)} = \lambda c_1|_{\pi_2(M)}$ where $c_1$ is the first Chern class of $M$ and $\lambda \in \mathbb{R}$.  We say $M$ is positively monotone if $\lambda > 0$ and negatively monotone if $\lambda < 0$.  Monotone symplectic manifolds are abundant: complex projective spaces are examples of positively monotone symplectic manifolds and examples of negatively monotone manifolds are provided by hypersurfaces of the form $z_0^m + \cdots + z_n^m = 0$ in $\mathbb{C}P^n$, where $m > n+1.$  We can now state the main theorem of this article:
  
  \begin{thm}\label{Main Theorem}
  Let $(M^{2n}, \omega)$ denote a closed, monotone symplectic manifold with $[\omega]|_{\pi_2(M)} =  \lambda c_1|_{\pi_2(M)}$.  Suppose that $H, G \in C^{\infty}_c([0,1] \times (M \setminus U))$.  There exist constants $C, \delta > 0$, depending on $U$, such that if $d_{C^0}(\phi^1_G, \phi^1_H) \leq \delta,$ then
  \begin{enumerate}
  \item $|c(a,G) - c(a,H)| \leq C \, d_{C^0}(\phi^1_G, \phi^1_H)$ when $\lambda \leq 0$,
  \medskip
  \item $|c(a,G) - c(a,H)| \leq C \, d_{C^0}(\phi^1_G, \phi^1_H) + n \lambda$ when $\lambda > 0.$
  \end{enumerate}  
  \end{thm}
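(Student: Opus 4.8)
The plan is to reduce the endpoint estimate to the path estimate \eqref{estimate for paths} of \cite{Se11}, by interpolating between $\phi^1_H$ and $\phi^1_G$ through a $C^0$-small Hamiltonian flow, and then to absorb the resulting ambiguity in the homotopy class of the interpolating path into the error term $n\max(0,\lambda)$ using monotonicity. By symmetry it suffices to bound $c(a,G)-c(a,H)$ from above.

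First I would set $\theta := (\phi^1_H)^{-1}\phi^1_G$. Since $H,G \in C^{\infty}_c([0,1]\times(M\setminus U))$, both time-$1$ maps are the identity on $U$, so $\theta \in Ham_c(M\setminus U)$; moreover the left-invariance of $d_{C^0}$ gives $d_{C^0}(\Id,\theta)=d_{C^0}(\phi^1_H,\phi^1_G)$. The key geometric step is to produce, for $\delta$ small enough, a Hamiltonian $F\in C^{\infty}_c([0,1]\times(M\setminus U))$ with $\phi^1_F=\theta$ whose entire flow stays $C^0$-close to the identity, namely $\max_t d_{C^0}(\phi^t_F,\Id)\le C'\,d_{C^0}(\Id,\theta)$ for a constant $C'$ depending only on $U$. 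This is exactly the type of local construction underlying \cite{Se11}, where the open set $U$ provides the room needed to realize a $C^0$-small diffeomorphism by a $C^0$-small \emph{path} supported away from $U$.

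Granting such an $F$, let $\tilde G := H\#F$ be the Hamiltonian generating $\phi^t_H\circ\phi^t_F$, so that $\phi^1_{\tilde G}=\phi^1_H\,\theta=\phi^1_G$. Using the left-invariance noted in the introduction one computes $(\phi^t_{\tilde G})^{-1}\phi^t_H=(\phi^t_F)^{-1}$, hence $d_{C^0}^{path}(\phi^t_{\tilde G},\phi^t_H)=\max_t d_{C^0}(\phi^t_F,\Id)\le C'\,d_{C^0}(\phi^1_G,\phi^1_H)$. Applying the path estimate \eqref{estimate for paths} to $\tilde G$ and $H$ then yields $|c(a,\tilde G)-c(a,H)|\le CC'\,d_{C^0}(\phi^1_G,\phi^1_H)$, provided $C'\delta$ lies below the threshold of \eqref{estimate for paths}.

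It remains to compare $c(a,\tilde G)$ with $c(a,G)$. Here $\tilde G$ and $G$ generate paths with the \emph{same} endpoint $\phi^1_G$ but possibly in different homotopy classes rel endpoints, so their difference is governed by the action of the loop class $\gamma:=\phi^t_{\tilde G}*\overline{\phi^t_G}\in\pi_1(Ham)$ via the Seidel representation; concretely $c(a,\tilde G)-c(a,G)$ is a difference of valuations in $QH^*(M)$. This is where monotonicity is decisive and where the main difficulty lies: the relation $[\omega]|_{\pi_2(M)}=\lambda c_1|_{\pi_2(M)}$ forces the action filtration and the grading to be proportional, $\mathcal A=\tfrac{\lambda}{2}\deg$, and since $QH^*(M)$ is concentrated in degrees $0,\dots,2n$ the total spread of the valuation is at most $n\lambda$. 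I expect the crux of the argument to be turning this spread into the clean one-sided bound $c(a,\tilde G)-c(a,G)\le n\max(0,\lambda)$, independently of which homotopy class $\gamma$ actually occurs. Combining this with the previous paragraph, and repeating the argument with the roles of $H$ and $G$ exchanged, gives the two claimed inequalities.
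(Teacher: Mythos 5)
Your reduction to bounding $c(a,G)-c(a,H)$ from above is fine, but the proof breaks at your ``key geometric step'': the claim that a $C^0$-small $\theta\in Ham_c(M\setminus U)$ can be written as $\phi^1_F$ for some $F\in C^{\infty}_c([0,1]\times(M\setminus U))$ whose \emph{entire flow} satisfies $\max_t d_{C^0}(\phi^t_F,\Id)\le C'\,d_{C^0}(\Id,\theta)$. This is not ``the local construction underlying \cite{Se11}''; that paper never produces $C^0$-small generating paths from $C^0$-small time-one maps --- it proves an estimate under the \emph{hypothesis} that the whole paths are $C^0$-close. Whether a Hamiltonian diffeomorphism that is $C^0$-close to the identity admits a generating Hamiltonian isotopy staying $C^0$-close to the identity is a hard open problem, and it is precisely the obstruction that makes the passage from the path estimate \eqref{estimate for paths} to an endpoint estimate nontrivial; the step you take for granted is, in effect, the theorem itself minus the homotopy ambiguity.

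There is a second gap: even granting such an $F$, your final step --- bounding $c(a,\tilde G)-c(a,G)$ by $n\max(0,\lambda)$ for two Hamiltonians in $C^{\infty}_c([0,1]\times(M\setminus U))$ with the \emph{same} time-one map --- is exactly Corollary \ref{Cor: descent of spectral invariants}, which the paper deduces \emph{from} Theorem \ref{Main Theorem}; invoking it as an input is circular unless you prove it independently, and you explicitly leave this ``crux'' unproven. Your Seidel-representation sketch has the right numerology (monotonicity ties the action filtration to the grading, and the grading spreads over $[0,2n]$, suggesting a valuation spread of at most $n\lambda$), but there is a real subtlety you do not address: your Hamiltonians are normalized by compact support rather than mean-normalized, the two normalizations differ by $\frac{1}{Vol(M)}Cal$ of the loop $\gamma$, and this Calabi term need not vanish --- indeed Ostrover's example (see Remark \ref{Remark on descent} and Remark \ref{Second remark on descent}) shows that mean-normalized invariants do \emph{not} descend, so any correct Seidel-type argument must carry this correction through. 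The paper's actual proof avoids both issues at once: it reduces to $G=0$, $a=1$ by the triangle inequality, composes $\phi^1_H$ with the flow of a slow Morse function $f$ whose critical points all lie in $U$ (the $\epsilon$-shift), observes that then $Fix(\phi^1_H\phi^1_f)=Crit(f)$ so that $Spec(H\#f)\subset\{f(p)-\omega(A)\}$, and uses that $c(1,H\#f)$ is attained on a class of Conley--Zehnder index $2n$: monotonicity converts the resulting index constraint $-n\le c_1(A)\le 0$ directly into the action bound, with no path construction and no Seidel element.
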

  
  \begin{rem}
  \begin{enumerate}
  \item Example 2.4 of \cite{Se11} can easily be modified to prove that the first estimate is sharp in the sense that a locally Lipschitz estimate is optimal.  We do not know if the additive constant $n \lambda$ in the second estimate is necessary; it appears as a byproduct of our methods and it may be possible to remove or improve it.
  \item The assumption that $H$ and $G$ vanish on $U$ is essential.  See Remark \ref{Remark on descent}  below.
  \end{enumerate}
  \end{rem}
  The proof of Theorem \ref{Main Theorem} relies on the $\epsilon$-shift technique which was developed in \cite{Se11}.  
   
   \begin{defn} \label{Epsilon Shift Definition} 
     Fix a positive real number $\epsilon$.  A subset of a symplectic manifold, $U \subset M$, is said to be $\epsilon$-shiftable if there exists a Hamiltonian diffeomorphism, $\phi$, such that 
     $$d(p, \phi(p)) \geq \epsilon \text{  } \forall p \in U.$$
         
  \end{defn}
  
  The main idea of the proof of Theorem \ref{Main Theorem} is as follows: the triangle inequality reduces the theorem to the case where $G=0$ and $a=1$.  We construct an appropriate Morse function $f$ such that $\phi^1_f \;$ $\epsilon$-shifts $M \setminus U$.  The $\epsilon$-shift technique allows us to bound $|c(1,H)|$ by actions of periodic orbits of $f$ with Conley-Zehnder index $2n$.  We then use the monotonicity assumption to show that actions of such orbits satisfy the right estimates.  The details of this argument are carried out in Section \ref{Proof of Main Theorem}.
  
  \subsection{Descent of spectral invariants} \label{Descent of spectral invariants}
  If a Hamiltonian $H$ is mean-normalized, i.e. $\int_{M}H_t \omega^n = 0$ for each $t \in [0,1]$, then the spectral invariant $c(a, H)$ depends only on the homotopy class (rel. endpoints) of $\phi^t_H \; t \in [0,1]$ in $Ham(M)$.  If $H$ is not mean-normalized, but $H \in C^{\infty}_c(M\setminus U)$, then $c(a, H)$ depends on the homotopy class (rel. endpoints) of $\phi^t_H \; (t \in [0,1])$ in $Ham_c(M \setminus U)$. Hence, spectral invariants can be defined on $\widetilde{Ham}(M)$ and $\widetilde{Ham}_c(M\setminus U)$. 
  
   In \cite{Sc00}, Schwarz showed that if $M$ is symplectically aspherical, i.e. $\omega|_{\pi_2} = c_1|_{\pi_2} =0,$ then spectral invariants (of mean-normalized Hamiltonians) descend to $Ham(M)$, i.e. $c(a,H) =  c(a,G)$ if $\phi^1_H = \phi^1_G$ where $H$ and $G$ are assumed to be mean-normalized.  In fact, if $M$ is weakly exact, i.e. $\omega|_{\pi_2} = 0$, Schwarz's argument yields the same conclusion.  Descent of spectral invariants has some significant consequences.  For example, Ostrover \cite{Os03} showed that $Ham(M)$ has infinite Hofer diameter whenever spectral invariants descend.  For many applications it is sufficient that the asymptotic spectral invariants descend.  These are defined as follows: for $H, G \in C^{\infty}([0,1] \times M)$ define $H\#G(t,x) = H(t,x) + G(t, (\phi^t_H)^{-1}(x))$; the flow of $H\#G$ is $\phi^t_H \circ \phi^t_G$.  Following Entov and Polterovich \cite{EP03}  we define the asymptotic spectral invariant by 
  $$\bar{c}(a, H) = \liminf_{n \to \infty} \frac{c(a, H^{\#n})}{n}.$$
  
   Not surprisingly, the search for conditions under which (asymptotic) spectral invariants descend has attracted much attention.  In \cite{EP03}, Entov and Polterovich use the Seidel representation to show that the asymptotic spectral invariants descend in the case of $\mathbb{C}P^n$.  In \cite{Mc10b}, McDuff discusses this topic in depth and, using the Seidel representation, discovers several such conditions.  McDuff's criteria essentially require that many of the genus zero Gromov-Witten invariants of $M$ vanish.  In \cite{Br11}, Branson expands on McDuff's work and proves that the asymptotic spectral invariants descend in many new cases such as monotone products of complex projective spaces and the Grassmanian $G(2,4)$.  
  
  It follows immediately from Theorem \ref{Main Theorem} that spectral invariants descend from $\widetilde{Ham}_c(M\setminus U)$ to $Ham_c(M\setminus U)$ on negatively monotone manifolds and asymptotic spectral invariants descend to $Ham_c(M \setminus U)$ on positively monotone manifolds.
  
  \begin{cor} \label{Cor: descent of spectral invariants}
  Let $(M^{2n}, \omega)$ denote a closed, monotone symplectic manifold with $[\omega]_{\pi_2(M)} =  \lambda c_1|{\pi_2(M)}$.  Suppose that $\phi^1_H = \phi^1_G$ where $H, G \in C^{\infty}_c([0,1] \times (M \setminus U))$.  Then,
  \begin{enumerate}
  \item $c(a,H) = c(a, G)$ when $\lambda \leq 0$,   
  \item $|c(a,H) -c(a,G)| \leq n \lambda$ when $\lambda > 0$. 
  \end{enumerate}
  
  \end{cor}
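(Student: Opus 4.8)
The plan is to derive the corollary as an essentially immediate specialization of Theorem \ref{Main Theorem}, the only real content being that the estimate there depends on the $C^0$ distance between the \emph{time-$1$ maps} rather than between the whole Hamiltonian paths. Recall that $c(a,H)$ a priori depends on the homotopy class rel endpoints of $\phi^t_H$, equivalently on the lift of $\phi^1_H$ to $\widetilde{Ham}_c(M\setminus U)$; the assertion to be proved is precisely that this dependence disappears (when $\lambda \le 0$) or is uniformly controlled by $n\lambda$ (when $\lambda > 0$) once the endpoint $\phi^1_H$ is fixed.

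First I would observe that the hypothesis $\phi^1_H = \phi^1_G$ forces $d_{C^0}(\phi^1_G, \phi^1_H) = 0$. Indeed, with the left-invariant definition $d_{C^0}(\phi,\psi) = \max_x d(x,\phi^{-1}\psi(x))$, the equality $\phi^1_H = \phi^1_G$ gives $(\phi^1_G)^{-1}\phi^1_H = \Id$, so every point is fixed and the maximum vanishes. In particular $d_{C^0}(\phi^1_G,\phi^1_H) = 0 \le \delta$ for any $\delta > 0$, so the hypothesis of Theorem \ref{Main Theorem} is satisfied, and it remains only to substitute into its two inequalities. When $\lambda \le 0$ we obtain $|c(a,G)-c(a,H)| \le C\cdot 0 = 0$, hence $c(a,H)=c(a,G)$, which is the first claim; when $\lambda > 0$ we obtain $|c(a,G)-c(a,H)| \le C\cdot 0 + n\lambda = n\lambda$, which is the second.

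There is essentially no obstacle to surmount here, since the entire difficulty is packaged into Theorem \ref{Main Theorem} and the corollary is a formal evaluation at coincident endpoints. The one conceptual point worth stressing is why this yields \emph{descent} in the group-theoretic sense: because $H$ and $G$ may trace out paths in distinct homotopy classes while sharing the same time-$1$ map, the two spectral numbers correspond to genuinely different elements of the universal cover $\widetilde{Ham}_c(M\setminus U)$, and the vanishing of the right-hand side (for $\lambda \le 0$) shows exactly that the spectral invariant is constant on each fiber of the covering projection $\widetilde{Ham}_c(M\setminus U) \to Ham_c(M\setminus U)$, i.e. that it descends. For $\lambda > 0$ the same substitution gives only an approximate descent with defect $n\lambda$, from which the descent of the asymptotic invariant $\bar c(a,\cdot)$ would follow by applying the bound to the iterates $H^{\#n}$ and dividing by $n$, though that refinement lies beyond the statement being proved here.
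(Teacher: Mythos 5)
Your proposal is correct and matches the paper's own treatment: the paper derives the corollary as an immediate consequence of Theorem \ref{Main Theorem}, exactly by noting that $\phi^1_H = \phi^1_G$ makes the $C^0$ distance between the time-$1$ maps zero and substituting into the two inequalities. Your additional remarks on why this constitutes descent (and approximate descent when $\lambda > 0$) are accurate and consistent with the paper's discussion in Section \ref{Descent of spectral invariants}.
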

  
  \begin{rem}\label{Remark on descent}
  The assumption that $H$ and $G$ vanish on $U$ is essential:  in \cite{Os06}, Ostrover showed that the asymptotic spectral invariants of mean-normalized Hamiltonians do not descend in the case of the monotone one point blow up of $\mathbb{C}P^2$; see also Remark 1.4 in \cite{Mc10b}. We discuss this issue further in Remark \ref{Second remark on descent} of Section \ref{Proof of Main Theorem}.

  \end{rem}
  
  \subsection{Applications to Hofer geometry} \label{Applications to Hofer geometry}
  The Hofer length of a Hamiltonian path $\phi^t_H \in PHam(M) \; t\in [0,1]$ is defined to be
  
  $$\mathcal{L}(\phi^t_H) = \Vert H \Vert_{(1,\infty)} := \int_{0}^{1} (\max_{M}H_t - \min_{M} H_t) dt.$$
  The Hofer norm of a Hamiltonian diffeomorphism $\psi \in Ham(M)$ is given by the following expression:
  $$\Vert \psi \Vert_{Hofer} = \inf \{ \Vert H \Vert_{(1,\infty)}: \psi = \phi^1_H \}.$$
  Similarly, if $\psi \in Ham_c(M \setminus U)$ then 
  $$\Vert \psi \Vert_{Hofer} = \inf \{ \Vert H \Vert_{(1,\infty)}: H \in C^{\infty}_c([0,1] \times (M\setminus U)) \; , \psi = \phi^1_H \}.$$
  This norm was introduced by Hofer in \cite{H90}.  Its non-degeneracy was established in \cite{H90} on $\mathbb{R}^{2n}$ and in \cite{LM95} on general symplectic manifolds.  The Hofer distance is given by: $d_{Hofer}(\phi, \psi) = \Vert \phi^{-1} \psi \Vert_{Hofer}.$ For further details on Hofer geometry we refer the reader to \cite{HZ, P}.
  
  The Hofer (pseudo)-norm is also defined on $\widetilde{Ham}(M)$ and $\widetilde{Ham}_c(M \setminus U)$; one takes infimum over all paths in the same homotopy class.  In this case it is not known if the Hofer (pseudo)-norm defines a norm.  The difficulty here is that there may exist non-contractible Hamiltonian loops  with zero Hofer length.
  
  \subsubsection{Infiniteness of Hofer diameter}
  It is believed, but not proven, that the Hofer norm is always unbounded on $Ham(M)$ and $Ham_c(M\setminus U)$. If it is unbounded on any of these groups we will say that the group has \emph{infinite (Hofer) diameter}. Infiniteness of diameter has been established on many closed manifolds.  An argument due to Ostrover \cite{Os03} proves that $\widetilde{Ham}(M)$ has infinite diameter.  Whenever spectral invariants or their asymptotic versions descend, Ostrover's argument yields the same consequence for $Ham(M)$.  However, his argument does not apply to  $Ham_c(M \setminus U)$: the argument relies on spectral invariants of mean-normalized Hamiltonians descending and thus it can not be combined with Corollary \ref{Cor: descent of spectral invariants}.
  
  For a non-closed manifold, such as $M \setminus U$, infiniteness of Hofer diameter of $\widetilde{Ham}_c(M \setminus U)$ can be established using the Calabi homomorphism: 
  $$Cal: \widetilde{Ham}_c(M \setminus U) \rightarrow \mathbb{R}, \;\;\; Cal(\tilde{\phi^t}):= \int_{0}^{1} \int_{M} H(t, \cdot) \omega^n dt,$$
  
  \noindent where $H \in C^{\infty}_c([0,1] \times (M\setminus U))$ is any Hamiltonian generating $\tilde{\phi^t}$.  $Cal$ is a homomorphism and it can easily be seen that 
  $Cal(\tilde{\phi^t}) \leq Vol(M) \Vert \tilde{\phi^t} \Vert_{Hofer}.$  Hence, we see that $\widetilde{Ham}_c(M \setminus U)$ always has infinite  diameter and if $Cal$ descends to $Ham_c(M \setminus U)$, then it has infinite diameter as well.  
  If $Cal$ does descend then the interesting question is whether the kernel of $Cal$ has infinite Hofer diameter.  In a sense, as noted in \cite{Mc10a}, this kernel plays the role of $Ham(M)$ for $M\setminus U$: Banyaga \cite{Ba78} showed that $\ker(Cal)$ is a perfect group and hence it admits no non-trivial homomorphism to $\mathbb{R}$.   As McDuff points out in \cite{Mc10a} (see Remark 3.11) if $Cal$ does not descend there are no standard methods for deciding whether $Ham_c(M \setminus U)$ has infinite diameter; the same is true of $kernel$ of $Cal$ if it does descend.
  
  One case where $\ker(Cal)$ is known to have infinite diameter is that of $Ham_c(B^{2n})$; Biran, Entov, and Polterovich \cite{BEP04} have shown that $Ham_c(B^{2n})$ admits more than one Calabi quasimorphism and that is sufficient for proving that kernel of $Cal$ has infinite diameter.  
 
  In Section \ref{Proofs of applications to Hofer geometry} we will use Corollary \ref{Cor: descent of spectral invariants} to settle new cases of the above questions.  Recall that $V \subset M$ is said to be displaceable if there exists $\psi \in Ham(M)$ such that $\psi(V) \cap V = \emptyset$.  More generally, $V$ is said to be stably displaceable if $V \times S^1 \subset M \times T^*S^1$ is displaceable.  Stable displaceability does not imply displaceability as shown in Example 1.28 of \cite{EP09a}.
  
  \begin{thm} \label{Th: infiniteness of diameter}
  Suppose that $M$ is monotone and let $U$ denote an open subset of $M$ whose closure, $cl(U)$, is stably displaceable.  Then, $Ham_c(M\setminus U)$ has infinite Hofer diameter.  Furthermore, if the Calabi homomorphism, $Cal,$ descends to $Ham_c(M  \setminus U),$ then $\ker(Cal)$ has infinite Hofer diameter.  
  \end{thm}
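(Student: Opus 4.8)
The plan is to produce, for every $R>0$, a compactly supported autonomous Hamiltonian whose time-one map lies in $Ham_c(M\setminus U)$ (and in $\ker(Cal)$ for the second assertion) and whose Hofer norm exceeds $R$. The lower bound on the Hofer norm will come from the asymptotic spectral invariant $\bar c(1,\cdot)$, which is controlled on $Ham_c(M\setminus U)$ by Corollary \ref{Cor: descent of spectral invariants}; the mechanism forcing this invariant to be large will come from the stable displaceability of $cl(U)$ through Lemma \ref{Strom's Lemma}. Note that this argument does not distinguish whether or not $Cal$ descends, so it handles the first assertion uniformly.

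First I would record a master inequality. If $\psi=\phi^1_F$ with $F\in C^\infty_c([0,1]\times(M\setminus U))$, then $F$ vanishes on $U$, so $X_F\equiv 0$ there and every flow restricts to the identity on $U$; the same holds for any $H$ generating $\psi$ and for each iterate $H^{\#j}$. Hence $\int_0^1\min_M(H^{\#j})_t\,dt\le 0$, and combining the standard bound $c(1,H^{\#j})\le\int_0^1\max_M(H^{\#j})_t\,dt$ with $\int_0^1\max_M(H^{\#j})_t\,dt\le\|H^{\#j}\|_{(1,\infty)}\le j\,\|H\|_{(1,\infty)}$ and the descent estimate of Corollary \ref{Cor: descent of spectral invariants} (so that $c(1,H^{\#j})$ and the value of the fixed generator $F^{\#j}$ differ by at most $n\lambda$), I obtain after dividing by $j$ and letting $j\to\infty$ that $\bar c(1,F)\le\|H\|_{(1,\infty)}$ for every generator $H$, whence
$$\bar c(1,\phi^1_F)\le\|\phi^1_F\|_{Hofer}.$$
It therefore suffices to produce $F$ with $\bar c(1,F)$ arbitrarily large.

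Next I would construct the Hamiltonians. Fix a small neighborhood $A$ of $cl(U)$ whose closure is still stably displaceable, and set $R=M\setminus A$. For a large constant $s>0$, let $F_s$ be an autonomous function supported in $M\setminus U$ with $F_s\equiv s$ on $R$, with $F_s=0$ near $cl(U)$, and interpolating in between. The content of stable displaceability, packaged by Lemma \ref{Strom's Lemma}, is a lower bound for the asymptotic spectral invariant that depends only on the values of $F_s$ on the complement of the stably displaceable set $A$: writing $E$ for the stable displacement energy of $A$, one expects $\bar c(1,F_s)\ge s-E$, independently of the behaviour of $F_s$ on $A$. Letting $s\to\infty$ and invoking the master inequality gives $\|\phi^1_{F_s}\|_{Hofer}\ge s-E\to\infty$, proving that $Ham_c(M\setminus U)$ has infinite Hofer diameter. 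It is precisely the insensitivity of this lower bound to the values on $A$ that I expect to be the main obstacle, and it is the one step that genuinely uses stable rather than ordinary displaceability through Lemma \ref{Strom's Lemma}.

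Finally, for the assertion about $\ker(Cal)$, I would exploit this same insensitivity to arrange in addition that $Cal=0$. Keeping $F_s\equiv s$ on $R$, I modify $F_s$ on the annular region $A\setminus cl(U)$ so that it takes suitable negative values with $\int_0^1\int_M F_s\,\omega^n\,dt=0$; this is possible since $A$ has positive volume, and it keeps the support inside $M\setminus U$. Because $F_s$ is unchanged on $R=M\setminus A$, the lower bound $\bar c(1,F_s)\ge s-E$ from Lemma \ref{Strom's Lemma} still applies, while now $\phi^1_{F_s}\in\ker(Cal)$. The master inequality then yields $\|\phi^1_{F_s}\|_{Hofer}\ge s-E\to\infty$ with every $\phi^1_{F_s}$ in $\ker(Cal)$, which completes the proof.
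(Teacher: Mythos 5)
Your proposal is correct and is essentially the paper's own argument: the paper likewise bounds the Hofer norm from below by spectral invariants using Corollary \ref{Cor: descent of spectral invariants} together with $L^{(1,\infty)}$-continuity, and constructs the same Hamiltonian profile (a large constant outside a stably displaceable neighborhood $V$ of $cl(U)$, zero near $cl(U)$, adjusted on $V\setminus U$ to kill the Calabi invariant). The step you flag as the ``main obstacle'' is in fact immediate and is exactly how the paper proceeds: write $F_s=(F_s-s)+s$, note that $F_s-s$ is compactly supported in a stably displaceable open set, so Lemma \ref{Strom's Lemma} gives $|c(1,F_s-s)|\le E$, and the shift property then yields $c(1,F_s)\ge s-E$; your use of the asymptotic invariant $\bar c$ in place of $c$ is a harmless variant whose only effect is to remove the additive constant $n\lambda$ from the master inequality.
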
 
  The above theorem can be viewed as generalization of the same facts about $Ham_c(B^{2n})$; see Remark \ref{Strom's Remark}.
  
 \subsubsection{Hofer's norm v.s. $C^0$ norm: Le Roux's question}
 The relation between Hofer's and the $C^0$ norm is mysterious.  The $C^0$ norm is never continuous with respect to Hofer's norm; any two points can be mapped to one another with arbitrarily small energy.  In \cite{H93}, Hofer compares the $C^0$-distance and the Hofer distance on $Ham_c(\mathbb{R}^{2n})$ and obtains the well known $C^0$-Energy estimate:
  $$ d_{Hofer}(\phi, \psi) \leq 256\; d_{C^0}(\phi, \psi).$$
  
  No estimate of this kind holds on compact manifolds.  In fact, one can show that on any surface there exists a sequence of Hamiltonian diffeomorphisms which converges to the identity in $C^0$-topology, but diverges with respect to Hofer's metric.  Attempting to understand the relation between these two distances led Le Roux \cite{Le10} to pose the following problem.  Let $X$ denote a compact manifold possibly with boundary.  Let $\mathcal{E}_{A}(X)$ denote the complement of the ball of radius $A$, in Hofer's metric, centered at the identity:
  
  $$\mathcal{E}_{A}(X) = \{ \phi \in Ham_c(X): \Vert \phi \Vert_{Hofer} > A\}.$$

  \begin{que} \label{Le Rroux's question}
  Does $\mathcal{E}_{A}(X)$ have non-empty $C^0$ interior for any $A>0$?
  \end{que}
    Le Roux's original question was posed for $X = B^2$, the unit ball in $\mathbb{R}^2$.  In \cite{EPP09}, Entov, Polterovich, and Py answered Le Roux's question affirmatively for $X=B^{2n}$, the unit ball in $\mathbb{R}^{2n}$.  As pointed out in \cite{EPP09} this question makes sense for any manifold $X$.  In Section  \ref{Proofs of applications to Hofer geometry}, we will prove the following:
    
    \begin{thm}\label{Th: Le Roux's question}
    Suppose that $M$ is monotone and let $U$ denote an open subset of $M$ whose closure, $cl(U)$, is stably displaceable. Then, for any $A  >0$, $\mathcal{E}_{A}(M \setminus U)$ has non-empty $C^0$ interior.
    \end{thm}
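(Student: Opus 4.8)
The plan is to deduce Theorem \ref{Th: Le Roux's question} from Theorem \ref{Th: infiniteness of diameter} together with the $C^0$-rigidity estimate of Theorem \ref{Main Theorem}. The key observation is that Theorem \ref{Main Theorem} provides $C^0$-control over spectral invariants, and since spectral invariants bound the Hofer norm from below, a diffeomorphism with large spectral invariant will retain a large Hofer norm under small $C^0$-perturbations. First I would recall the standard inequality $|c(a,H)| \leq \Vert H \Vert_{(1,\infty)}$ relating spectral invariants to Hofer length, which implies that for any $\psi = \phi^1_H \in Ham_c(M \setminus U)$ one has a lower bound of the form $|c(a,\psi)| \leq \Vert \psi \Vert_{Hofer}$ for a suitable fundamental class $a$ (here using that $H$ vanishes on $U$, so spectral invariants descend up to the controlled error from Corollary \ref{Cor: descent of spectral invariants}).

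The strategy is then as follows. By Theorem \ref{Th: infiniteness of diameter}, since $cl(U)$ is stably displaceable, $Ham_c(M \setminus U)$ has infinite Hofer diameter; hence for any $A > 0$ there exists $\psi_0 \in Ham_c(M \setminus U)$ with $\Vert \psi_0 \Vert_{Hofer}$ as large as we wish. I would choose $\psi_0 = \phi^1_H$ with spectral invariant $|c(a, H)|$ larger than $A$ by a definite margin — concretely, exceeding $A$ plus the additive error $n\max(0,\lambda)$ coming from Theorem \ref{Main Theorem}. The goal is to show that a whole $C^0$-ball around $\psi_0$ lies in $\mathcal{E}_A(M \setminus U)$. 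Let $\delta, C > 0$ be the constants from Theorem \ref{Main Theorem} associated to $U$. Given any $\psi_1 = \phi^1_G \in Ham_c(M \setminus U)$ with $d_{C^0}(\psi_0, \psi_1) \leq \delta$, Theorem \ref{Main Theorem} yields
$$|c(a, G) - c(a, H)| \leq C\, d_{C^0}(\psi_0, \psi_1) + n\max(0,\lambda).$$
Combined with the lower bound $\Vert \psi_1 \Vert_{Hofer} \geq |c(a, G)|$ and the reverse triangle inequality, this forces $\Vert \psi_1 \Vert_{Hofer} \geq |c(a,H)| - C\,\delta - n\max(0,\lambda)$, which exceeds $A$ provided $\psi_0$ was chosen with $|c(a,H)| > A + C\delta + n\max(0,\lambda)$. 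Therefore every $\psi_1$ in the $C^0$-ball of radius $\delta$ about $\psi_0$ has Hofer norm greater than $A$, exhibiting a nonempty $C^0$-open subset of $\mathcal{E}_A(M \setminus U)$.

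The main subtlety I anticipate is bookkeeping with the fundamental class and the normalization of Hamiltonians. Theorem \ref{Main Theorem} and the inequality $|c(a,H)| \leq \Vert H \Vert_{(1,\infty)}$ both require the Hamiltonians to be compactly supported in $M \setminus U$; one must ensure that in passing from the Hofer norm of the diffeomorphism $\psi_1$ (defined as an infimum over \emph{all} compactly supported generating Hamiltonians) to the spectral estimate, the bound applies uniformly, which is legitimate precisely because spectral invariants are well-defined on $Ham_c(M \setminus U)$ up to the controlled error of Corollary \ref{Cor: descent of spectral invariants}. The additive constant $n\max(0,\lambda)$ in the positively monotone case is harmless here since it is absorbed into the initial choice of $\psi_0$; thus the argument proceeds uniformly across the sign of $\lambda$, and the only genuine input beyond Theorem \ref{Main Theorem} is the infiniteness of Hofer diameter supplied by Theorem \ref{Th: infiniteness of diameter}.
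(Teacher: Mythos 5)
Your overall strategy coincides with the paper's: produce $\psi_0=\phi^1_H$ whose spectral invariant exceeds $A$ by a definite margin, use Theorem \ref{Main Theorem} to keep $c(1,G)$ large for every $\phi^1_G$ in the $C^0$-ball of radius $\delta$ about $\psi_0$, and convert this into a lower bound for $\Vert \phi^1_G \Vert_{Hofer}$ via Corollary \ref{Cor: descent of spectral invariants} and $L^{(1,\infty)}$-continuity. However, there is a genuine gap at the very first step. You obtain $\psi_0$ by invoking the \emph{statement} of Theorem \ref{Th: infiniteness of diameter}: infinite Hofer diameter gives elements of arbitrarily large Hofer norm, and you then ``choose $\psi_0$ with spectral invariant $|c(a,H)|$ larger than $A$.'' This inference is backwards. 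Spectral invariants bound the Hofer norm from below (up to the additive constant $n\max(0,\lambda)$), not the other way around: an element of enormous Hofer norm may a priori have small spectral invariant, so infinite Hofer diameter by itself does not supply the $\psi_0$ your argument needs. What you actually need is unboundedness of $c(1,\cdot)$ on $C^{\infty}_c([0,1]\times(M\setminus U))$, and that is precisely what the \emph{proof} of Theorem \ref{Th: infiniteness of diameter} establishes (and is what the paper cites when proving Theorem \ref{Th: Le Roux's question}): take a stably displaceable open set $V\supset cl(U)$ and a Hamiltonian $F$ supported in $V$ with $F\equiv -C$ on $U$; Lemma \ref{Strom's Lemma} gives $|c(1,F)|\leq E$, so $H=F+C\in C^{\infty}_c([0,1]\times(M\setminus U))$ satisfies $c(1,H)\geq C-E$, which is arbitrarily large. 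With that substitution your argument goes through.

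Two smaller points of bookkeeping. First, your lower bound $\Vert\psi_1\Vert_{Hofer}\geq |c(a,G)|$ is not quite correct in the positively monotone case: the Hofer norm is an infimum over \emph{all} Hamiltonians in $C^{\infty}_c([0,1]\times(M\setminus U))$ generating $\psi_1$, and comparing $c(1,G)$ with $c(1,G')$ for another such generator $G'$ costs the descent error of Corollary \ref{Cor: descent of spectral invariants}; the correct bound is $\Vert\psi_1\Vert_{Hofer}\geq c(1,G)-n\max(0,\lambda)$. Hence the error $n\max(0,\lambda)$ enters \emph{twice} --- once through Theorem \ref{Main Theorem} and once through descent --- matching the $2n\lambda$ appearing in the paper's proof, so your margin should read $c(1,H)>A+C\delta+2n\max(0,\lambda)$; this is harmless, since the margin is at your disposal. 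Second, the inequality $|c(a,H)|\leq\Vert H\Vert_{(1,\infty)}$ requires $c(a,0)=0$, which holds for $a=1$ by the normalization property but not for a general quantum cohomology class $a$; you should fix $a=1$ throughout, as the paper does.
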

    The following observation is due to Strom Borman:
    \begin{rem}\label{Strom's Remark}
    Theorems \ref{Th: infiniteness of diameter} and \ref{Th: Le Roux's question} can be viewed as generalizations of the same facts about $Ham_c(B^{2n}),$ by taking $M$ to be $\mathbb{C}P^n$ and $U$ a small neighborhood of $\mathbb{C}P^{n-1}$.  $U$ is not displaceable because the intersection product of $\mathbb{C}P^{n-1}$ with itself is non-trivial. However, $U$ is stably displaceable; see Corollary 11 in \cite{Bor10}.
    \end{rem}
  
  \subsection{Extension to compact manifolds with convex boundary}\label{Non-closed manifolds}
  
    Theorems \ref{Main Theorem}, \ref{Th: infiniteness of diameter}, \ref{Th: Le Roux's question}, and Corollary \ref{Cor: descent of spectral invariants} extend to some compact manifolds with boundary, e.g. $T^*_rN$ the cotangent ball bundle of radius $r$ over a closed manifold $N$.  See Section \ref{Extension} for more details.

  \section{Review of spectral invariants} \label{Spectral Invariants}
  In this section we briefly review the construction of spectral invariants on closed symplectic manifolds.  For further details we refer the interested reader to \cite{MS2, Oh06a}.  
  
   Define 
   $$\Gamma:= \frac{\pi_2(M)}{\ker(c_1) \cap \ker([\omega])}.$$
   The Novikov ring of $(M, \omega)$ is defined to be
   $$ \displaystyle \Lambda = \{ \sum_{A\in \Gamma}{a_A A }: a_A \in \mathbb{Q}, (\forall C \in \mathbb{R}) (|\{A: a_A \neq 0, \int_{A} {\omega} < C\}| < \infty)\}.$$  
   Let $\Omega_0(M)$ denote the space of contractible loops in $M$.  $\Gamma$ forms the group of deck transformations of a covering $\tilde{\Omega}_0(M) \rightarrow \Omega_0(M)$ called the Novikov covering of $\Omega_0(M)$ which can be described as follows:
   $$\tilde{\Omega}_0(M) = \frac{ \{ [z,u]: z \in \Omega_0(M) , u: D^2 \rightarrow M , u|_{\partial D^2} = z \}}{[z,u] = [z', u'] \text { if } z=z' \text{ and } \bar{u} \# u' = 0 \text{ in } \Gamma},$$
   where $\bar{u} \# u'$ denotes the sphere obtained by gluing $u$ and $u'$ along their common boundary with the orientation on $u$ reversed.  
   
   The action functional, associated to a Hamiltonian $H \in C^{\infty}([0,1]\times M)$, is the map $\mathcal{A}_H:\tilde{\Omega}_0(M) \rightarrow \mathbb{R}$ given by
   $$\mathcal{A}_H([z,u]) =  \int H(t,z(t))dt \text{ }- \int_{u} \omega.$$
   Note that $$\mathcal{A}_H([z,u\#A]) = \mathcal{A}_H([z,u]) - \omega(A),$$ for every $A \in \Gamma$. 
   $Crit(\mathcal{A}_H) = \{ [z,u] : \text{ z is a 1-periodic orbit of } X_H \}$ denotes the set of critical points of $\mathcal{A}_H$.  The action spectrum of $H$ is defined to be the set of critical values of the action functional, i.e., $Spec(H) = \mathcal{A}_H(Crit(\mathcal{A}_H))$.  $Spec(H)$ is a measure zero subset of $\mathbb{R}$.
   
    We say that a Hamiltonian $H$ is non-degenerate if the graph of $\phi^1_H$ intersects the diagonal in $M \times M$ transversally.  The Floer chain complex of (non-degenerate) $H$, $CF_*(H)$, is generated as a module over $\Lambda$ by $Crit(\mathcal{A}_H)$.  The complex $CF_*(H)$ is graded by the Conley-Zehnder index, $\mu_{cz}: Crit(\mathcal{A}_H) \rightarrow \mathbb{Z}$, which satisfies
    $$\mu_{cz}([z,u\#A]) = \mu_{cz}([z,u]) - 2 c_1(A),$$
    \noindent for every $A \in \Gamma$.   Various conventions are used for defining the Conley-Zehnder index.  We fix our convention as follows: let $f$ denote a $C^2$-small Morse function.  For every critical point $p$ of $f$, we require that 
   $$i_{Morse}(p) = \mu_{cz}([p, u_p]),$$
   where $u_p$ is a trivial capping disc and $i_{Morse}(p)$ is the Morse index of $p$.  The boundary map of this complex is obtained, formally, by counting isolated negative gradient flow lines of $\mathcal{A}_H$.  The homology of this complex, $HF_*(H)$, is naturally isomorphic to $QH^*(M) = H^*(M) \otimes \Lambda$, the quantum cohomology of $M$.  We denote this natural isomorphism, which is called the PSS isomorphism \cite{PSS}, by 
   $\Phi_{pss} : QH^*(M) \rightarrow HF_*(H)$.  Our conventions imply that $\Phi_{pss}$ identifies $QH^k(M)$ with $HF_{2n-k}(H)$.
   
   Given $\displaystyle \alpha = \sum_{[z,u] \in Crit(\mathcal{A}_H)}{a_{[z,u]}[z,u]} \in CF_*(H)$ we define the action level of $\alpha$ by
   $$\lambda_H(\alpha) = \max\{\mathcal{A}_H([z,u]): a_{[z,u]} \neq  0 \}.$$
   Finally, given a non-zero quantum cohomology class $a$, we define the spectral invariant associated to $H$ and $a$ by
   $$c(a,H) = \inf \{ \lambda_H(\alpha): [\alpha] = \Phi_{pss}(a)\},$$
   where $[\alpha]$ denotes the Floer homology class of $\alpha$.  It was shown in \cite{Oh05b} that $c(a,H)$ is well defined, i.e., it is independent of the auxiliary data (almost complex structure) used to define it and $c(a,H) \neq -\infty$.
   
   Thus far we have defined $c(a,H)$ for non-degenerate $H$.  The spectral invariants of two non-degenerate Hamiltonians $H$, $G$ satisfy the following estimate $$|c(a, H) - c(a, G)| \leq \int_{0}^{1} \max_{x\in M} |H_t -G_t|  dt.$$ This estimate allows us to extend $c(a, \cdot)$ continuously to all smooth (in fact continuous) Hamiltonians.  
   
   We will now list, without proof, some properties of $c$ which will be used later on.  Recall that the composition of two Hamiltonian flows, $\phi^t_H \circ \phi^t_G$, and the inverse of a flow, $(\phi^t_H)^{-1},$ are Hamiltonian flows generated by $H\#G(t,x) = H(t,x) + G(t, (\phi^t_H)^{-1}(x))$ and $\bar{H}(t,x) = -H(t, \phi^t_H(x))$, respectively. 
   
   \begin{prop} \label{Properties of Spectral Invariants}(\cite{Oh05b, Oh06a, Sc00, Us10a})\\
    The function $c: (QH^*(M) \setminus {0}) \times C^{\infty}([0,1] \times M)  \rightarrow \mathbb{R}$ has the following properties:
    \begin{enumerate}
    \item (Shift)If $r : [0,1] \rightarrow \mathbb{R}$ is smooth, then $c(a,H+r) = c(a,H) + \int_{0}^{1}{r(t) dt}.$
    \item (Normalization) $c(1,0) =0$.
    \item (Symplectic Invariance) $c(\eta^*a, \eta^*H ) = c(a, H)$ for any symplectomorphism $\eta$.
    \item (Monotonicity) If $H \leq G$, then $c(a,H) \leq c(a,G)$.
    \item (Triangle Inequality) $c(a*b,H\#G) \leq c(a,H) + c(b,G)$ where $*$ denotes the quantum product in $QH^*(M)$.
    \item ($L^{(1,\infty)}-continuity$) If $H$ and $G$ are mean-normalized, or if $H, G \in C^{\infty}_c([0,1]\times (M\setminus U),$ then $|c(a,H) - c(a,G)| \leq \Vert H - G \Vert_{(1, \infty)}$.
    \item (Spectrality) $c(a,H) \in Spec(H)$ for non-degenerate $H$.
    \item (Homotopy Invariance) Suppose that $H$ and $G$ are mean-normalized and generate the same element of $\widetilde{Ham}(M)$.  Then, $c(a,H) = c(a,G)$.  The same conclusion holds if $H, G \in C^{\infty}_c([0,1]\times (M\setminus U))$ generate the same element of $\widetilde{Ham}_c(M\setminus U)$.
    
    \end{enumerate}   
   \end{prop}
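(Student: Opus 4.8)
The plan is to establish each of the eight properties from the behavior of the action functional $\mathcal{A}_H$ under the relevant operation, combined with the standard Floer-theoretic machinery reviewed above: continuation maps, the PSS isomorphism $\Phi_{pss}$, and the pair-of-pants product. The properties fall naturally into three groups of increasing difficulty, and I would treat them in that order, deferring the genuinely technical inputs to the cited works of Oh, Schwarz, and Usher.

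First, the \emph{Shift}, \emph{Normalization}, and \emph{Symplectic Invariance} properties follow almost immediately from the definitions. For Shift, adding a time-dependent constant $r(t)$ does not change $X_H$, so $H$ and $H+r$ have identical $1$-periodic orbits and the same Floer complex; one checks directly that $\mathcal{A}_{H+r}([z,u]) = \mathcal{A}_H([z,u]) + \int_0^1 r(t)\,dt$, which shifts every action level by the same constant and hence shifts $c(a,\cdot)$ by $\int_0^1 r$. Normalization, $c(1,0)=0$, is read off by perturbing $H=0$ to a $C^2$-small Morse function $f$: under the index convention fixed above, $\Phi_{pss}$ sends the unit $1\in QH^0(M)$ into the top-index part $HF_{2n}$, represented by a maximum of $f$ with trivial capping, whose action tends to $0$ as $f\to 0$. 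Symplectic Invariance holds because a symplectomorphism $\eta$ induces an action-preserving bijection of capped orbits intertwining the PSS maps, so the two infima defining $c(\eta^*a,\eta^*H)$ and $c(a,H)$ coincide.

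Next, \emph{Monotonicity} and $L^{(1,\infty)}$-\emph{continuity} both come from the continuation maps between $HF_*(H)$ and $HF_*(G)$ together with the a priori energy estimate. When $H\le G$, the monotone continuation map raises action levels by at most $\int_0^1 \max_x(G_t-H_t)\,dt$; pushing a cycle representing $\Phi_{pss}(a)$ through this map and taking infima yields $c(a,H)\le c(a,G)$, and symmetrizing the bound in $H$ and $G$ gives $|c(a,H)-c(a,G)|\le \Vert H-G\Vert_{(1,\infty)}$. This last estimate is precisely what extends $c(a,\cdot)$ continuously from non-degenerate Hamiltonians to all continuous ones.

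The substantial properties are the \emph{Triangle Inequality}, \emph{Spectrality}, and \emph{Homotopy Invariance}. The Triangle Inequality is the action-filtered counterpart of the statement that the pair-of-pants product on Floer homology corresponds to the quantum product on $QH^*(M)$: given cycles $\alpha,\beta$ nearly computing $c(a,H)$ and $c(b,G)$, the pair-of-pants product produces a cycle representing $a*b$ for $H\#G$ whose action level is at most $\lambda_H(\alpha)+\lambda_G(\beta)$, and passing to infima gives $c(a*b,H\#G)\le c(a,H)+c(b,G)$; the real work lies in controlling the action along the pair-of-pants moduli, which is done in the references. Homotopy Invariance follows because two such Hamiltonians generating the same element of $\widetilde{Ham}(M)$ (resp.\ $\widetilde{Ham}_c(M\setminus U)$) have flows homotopic rel endpoints, along which $c(a,\cdot)$ is continuous yet valued in the measure-zero set $Spec$, hence locally and therefore globally constant. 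I expect \emph{Spectrality} to be the main obstacle: one must show that for non-degenerate $H$ the infimum defining $c(a,H)$ is actually \emph{attained}, so that $c(a,H)=\mathcal{A}_H([z,u])$ for a genuine capped $1$-periodic orbit rather than merely lying in the closure of such values. This is not a formal consequence of the construction; it rests on the finiteness and attainment arguments of Oh's and Usher's spectral theory, and is the one step I would cite rather than reprove.
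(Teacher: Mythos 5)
The paper itself proves none of items (1)--(7): it cites \cite{Oh05b, Oh06a, Sc00, Us10a} and declares them standard, so your sketches of Shift, Normalization, Symplectic Invariance, Monotonicity, the Triangle Inequality and Spectrality are not in conflict with anything --- they are a reasonable outline of the cited arguments. The problems are in the two places where a hypothesis actually has to be used, and one of them is the only point the paper argues at all.

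First, Homotopy Invariance. Your argument --- the flows are homotopic rel endpoints, $c(a,\cdot)$ is continuous along the homotopy ``yet valued in the measure-zero set $Spec$, hence locally and therefore globally constant'' --- is incomplete as stated: it needs the action spectrum to be one \emph{fixed} measure-zero set along the whole homotopy. A continuous function whose value at each $s$ lies in a measure-zero set $Spec(H^s)$ \emph{depending on} $s$ need not be constant, and the constancy of $Spec(H^s)$ under homotopy rel endpoints is a genuine lemma (due to Schwarz \cite{Sc00}, see also \cite{Oh05b}) which holds only for suitably normalized Hamiltonians; you never invoke it. Worse, you dispose of the second half of (8) --- the case $H,G \in C^{\infty}_c([0,1]\times(M\setminus U))$, which are \emph{not} mean-normalized --- with the parenthetical ``(resp.\ $\widetilde{Ham}_c(M\setminus U)$)''. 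That half is exactly what the paper does prove, and by a different route: it reduces to the mean-normalized case via the Shift property, the key point being that two such Hamiltonians generating the same element of $\widetilde{Ham}_c(M\setminus U)$ have equal Calabi integrals, $\int_{0}^{1}\int_M H_t\,\omega^n\,dt = \int_{0}^{1}\int_M G_t\,\omega^n\,dt$, so mean-normalizing shifts both spectral invariants by the same constant. Your sketch contains no substitute for this step.

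Second, $L^{(1,\infty)}$-continuity. The two-sided continuation estimate gives $\int_0^1\min_M(H_t-G_t)\,dt \le c(a,H)-c(a,G)\le \int_0^1\max_M(H_t-G_t)\,dt$, and ``symmetrizing the bound in $H$ and $G$'' does \emph{not} yield $|c(a,H)-c(a,G)|\le\Vert H-G\Vert_{(1,\infty)}$ in general: for $H=G+1$ the left-hand side equals $1$ by the Shift property, while $\Vert H-G\Vert_{(1,\infty)}=0$. The stated inequality is true only because the hypothesis of item (6) (both Hamiltonians mean-normalized, or both supported in $M\setminus U$ with $U$ non-empty) forces $\min_M(H_t-G_t)\le 0\le\max_M(H_t-G_t)$ for every $t$, which converts the two-sided estimate into the $(1,\infty)$-bound. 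Your proof never uses that hypothesis, which is a sign that the argument as written proves a false statement; the hypothesis-free estimate is $|c(a,H)-c(a,G)|\le\int_0^1\max_M|H_t-G_t|\,dt$, and that weaker bound is the one that gives the continuous extension of $c(a,\cdot)$.
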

   
    The homotopy invariance property for Hamiltonians in $C^{\infty}_c([0,1]\times (M\setminus U))$ follows from the same property for mean-normalized Hamiltonians: if $H, G \in C^{\infty}_c([0,1]\times (M\setminus U))$ generate the same element of $\widetilde{Ham}_c(M\setminus U)$ then $\int_{0}^{1}(\int_M{H_t \omega^n})\;dt = \int_{0}^{1}(\int_M{G_t\omega^n}) \;dt$.  The rest of the above properties are standard.
   
  \section{Proof of Theorem \ref{Main Theorem}} \label{Proof of Main Theorem}
  In this section we prove Theorem \ref{Main Theorem}. 
  
  \begin{rem}\label{Second remark on descent}    
 As mentioned in the introduction, Corollary \ref{Cor: descent of spectral invariants} does not imply that (asymptotic) spectral invariants of mean-normalized Hamiltonians descend.  Suppose that $H \in C^{\infty}_c([0,1] \times M)$ is mean-normalized and that it generates a loop in $Ham_c(M\setminus U)$.  Then, it is not hard to see that Corollary \ref{Cor: descent of spectral invariants} implies that 
 \begin{itemize}
 \item $c(1,H) +  \frac{1}{Vol(M)} \;Cal (\phi^t_H) = 0$, if $\lambda \leq 0$, and 
 \item $|c(1,H) + \frac{1}{Vol(M)} \; Cal(\phi^t_H)| \leq n \lambda$, if $\lambda > 0.$
 \end{itemize}

 Thus, asymptotic spectral invariants of mean-normalized Hamiltonians that generate paths in $Ham_c(M\setminus U)$ descend if and only if the Calabi homomorphism descends to $Ham_c(M\setminus U)$.   In Remark 3.10 of \cite{Mc10a}, McDuff gives a prescription for construction loops in $Ham_c(M\setminus U)$ with non-vanishing Calabi invariant.
\end{rem}
  
 We will be using the following terminology which we are borrowing from \cite{Sch06, Us10a}.
  
  \begin{defn}
  A time independent Hamiltonian $f: M \rightarrow \mathbb{R}$ is said to be \emph{slow}, if its Hamiltonian flow $\phi^t_f$ has no non-trivial, contractible periodic orbits of period at most $1$.  
  \end{defn}
  
  Recall that we say $U \subset M$ is $\epsilon$-shiftable if there exists a Hamiltonian diffeomorphism, $\phi$, such that 
     $d(p, \phi(p)) \geq \epsilon \text{  } \forall p \in U$; see Definition \ref{Epsilon Shift Definition}.  The following theorem, which constitutes the main step towards the proof of Theorem \ref{Main Theorem}, is the main reason for introducing the notion of $\epsilon$-shiftability.  
  
  \begin{thm} \label{Epsilon Shift Theorem}
  
 Suppose that the support of a Hamiltonian $H$ can be $\epsilon$-shifted by $\phi^1_f \in Ham(M)$, where $f$ denotes a slow Hamiltonian.  If $d_{C^0}(Id, \phi^1_H) < \epsilon$, then
   \begin{enumerate}
    \item $|c(1,H) |< 2 \Vert f \Vert_{\infty}, \text{ if } \lambda \leq 0,$
    \item $|c(1,H) |< 2 \Vert f \Vert_{\infty} + n \lambda, \text{ if } \lambda > 0.$  
   \end{enumerate}
  \end{thm}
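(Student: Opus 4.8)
The plan is to deduce both inequalities from an analysis of the spectral invariant of the composed Hamiltonian $f \# H$, whose flow is $\phi^t_f \circ \phi^t_H$. Set $\kappa := \max(0, n\lambda)$, so that the two cases of the theorem read uniformly as $|c(1,H)| < 2\Vert f\Vert_\infty + \kappa$. First I would establish the \emph{shift lemma}: the map $\phi^1_f \circ \phi^1_H$ has no fixed point in $\supp H$. Indeed, if $x \in \supp H$ were such a fixed point, put $y = \phi^1_H(x)$; since the flow of $H$ preserves $\supp H$ we have $y \in \supp H$, while $\phi^1_f(y) = x$. Then $d(y, \phi^1_f(y)) = d(y,x) = d(x, \phi^1_H(x)) < \epsilon$ by hypothesis, contradicting the $\epsilon$-shift inequality $d(y, \phi^1_f(y)) \geq \epsilon$ valid for $y \in \supp H$. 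Outside $\supp H$ the map $\phi^1_H$ is the identity, so the fixed points of $\phi^1_f \circ \phi^1_H$ are exactly those of $\phi^1_f$ lying in $M \setminus \supp H$. Because $f$ is slow, its only contractible $1$-periodic orbits are the constant orbits at its critical points; consequently every contractible $1$-periodic orbit of $f \# H$ is a constant orbit at some $x \in \mathrm{Crit}(f) \setminus \supp H$, and, using $H(t,x)=0$ there, its action is $\mathcal{A}_{f \# H}([x, u_x \# A]) = f(x) - \omega(A) = f(x) - \lambda c_1(A)$ for $A \in \Gamma$. After a $C^2$-small perturbation of $f$ among slow Morse functions (which changes $\Vert f\Vert_\infty$ negligibly) we may assume these orbits are non-degenerate, so $f \# H$ is non-degenerate and spectrality applies.

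Next comes the upper bound. By spectrality (Proposition \ref{Properties of Spectral Invariants}(7)), $c(1, f\#H)$ is the action of a $1$-periodic orbit, and since $\Phi_{pss}$ sends $1 \in QH^0$ into $HF_{2n}$ the realizing orbit has Conley--Zehnder index $2n$. Thus $c(1, f\#H) = f(x_0) - \lambda c_1(A_0)$ with $i_{Morse}(x_0) - 2c_1(A_0) = 2n$, where I use that for a slow Hamiltonian the Conley--Zehnder index of a constant orbit with trivial capping equals its Morse index. As $0 \leq i_{Morse}(x_0) \leq 2n$, this forces $-n \leq c_1(A_0) \leq 0$, whence $-\lambda c_1(A_0) \leq \kappa$ and $c(1, f\#H) \leq \max f + \kappa$. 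Writing $H = \bar f \# (f \# H)$ and applying the triangle inequality together with $c(1, \bar f) \leq \max \bar f = -\min f$ (monotonicity and the shift property) gives $c(1,H) \leq c(1,\bar f) + c(1, f\#H) \leq -\min f + \max f + \kappa \leq 2\Vert f\Vert_\infty + \kappa$.

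For the lower bound I would invoke non-negativity of the spectral norm: since $H \# \bar H \equiv 0$, the triangle inequality yields $0 = c(1,0) \leq c(1,H) + c(1,\bar H)$, so $c(1,H) \geq -c(1,\bar H)$. The pair $(\bar H, f)$ again satisfies the hypotheses, because $\supp \bar H = \supp H$ is $\epsilon$-shifted by $\phi^1_f$ and $d_{C^0}(\Id, \phi^1_{\bar H}) = \max_x d(x, (\phi^1_H)^{-1}(x)) < \epsilon$. Hence the upper bound applies to $\bar H$ and gives $c(1, \bar H) \leq 2\Vert f\Vert_\infty + \kappa$, so that $c(1,H) \geq -(2\Vert f\Vert_\infty + \kappa)$. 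Combining the two bounds yields $|c(1,H)| \leq 2\Vert f\Vert_\infty + \kappa$; the strict inequality is recovered from the strict hypothesis $d_{C^0} < \epsilon$, which leaves room in the shift lemma.

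I expect the main obstacle to be the spectral-theoretic input of the second paragraph: pinning down that $c(1, f\#H)$ is realized by a constant orbit of Conley--Zehnder index exactly $2n$, and that for a slow $f$ this index coincides with the Morse index. Everything else is bookkeeping with the standard properties of $c$ in Proposition \ref{Properties of Spectral Invariants}, but this index identification is precisely what converts the topological $\epsilon$-shift into a quantitative action bound, and it is the one place where the monotonicity relation $\omega = \lambda c_1$ is used to turn the capping class $A_0$ into the additive term $\kappa$.
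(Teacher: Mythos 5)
Your proposal is correct and follows essentially the same route as the paper's own proof: identify the fixed points of the composed flow with the critical points of $f$ via the $\epsilon$-shift, use spectrality and the PSS degree convention to realize $c(1,\cdot)$ on an orbit of Conley--Zehnder index $2n$, turn the capping class into the bound $-n \le c_1(A) \le 0$ via monotonicity, and finish with the triangle inequality applied to $\bar f = -f$ and then to $\bar H$ together with $c(1,H)+c(1,\bar H)\ge 0$. The only differences are cosmetic: you compose in the order $f\# H$ rather than $H\# f$, you secure non-degeneracy by perturbing $f$ rather than by observing that the composition agrees with $\phi^1_f$ near its fixed points, and you merge the two cases into one via $\kappa = \max(0,n\lambda)$.
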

  
  We will now present a proof of Theorem \ref{Main Theorem}.  This stage of our proof closely parallels the arguments from a similar stage of \cite{Se11}. 
  
  \begin{proof} [Proof of Theorem \ref{Main Theorem}]
   First, suppose that $G=0, a=1$. We have to show that there exist constants $C, \delta > 0$ such that whenever $d_{C^0}(Id, \phi^1_H) < \delta$ then 
      \begin{itemize} 
     \item $|c(1,H)| \leq C \; d_{C^0}(Id, \phi^1_H), \text{ if } \lambda \leq 0,$
     \item $|c(1,H)| \leq C \; d_{C^0}(Id, \phi^1_H) + n \lambda, \text{ if } \lambda  > 0.$
     \end{itemize}
     
     Pick a slow Morse function $f$ all of whose critical points are contained in $U$, and denote by $X_f$ the Hamiltonian vector field of $f$.  Let $C_1 : = \inf \{\Vert X_f(x)\Vert: x \in M \setminus U\}$.  The set $M \setminus U$ is compact; thus $C_1 >0$ and we can find a sufficiently small $r > 0$, such that for each $s\in [0, r]$ the Hamiltonian diffeomorphism $\phi^s_f\text{ } \frac{C_1 s}{2}$-shifts the set $M\setminus U$.  Now consider $H \in C^{\infty}_c([0,1] \times  ( M \setminus U ))$ such that $d_{C^0}(Id, \phi^1_H) < \frac{C_1 r}{2}$.  By construction, for any $s \in (\frac{2}{C_1} d_{C^0}(Id, \phi^1_H), r]$, the Hamiltonian diffeomorphism $\phi^s_f\text{ } \frac{C_1s}{2}$-shifts the support of $H$, and  $d_{C^0}(Id, \phi^1_H) < \frac{C_1s}{2}$. 
   Hence, we can apply Theorem \ref{Epsilon Shift Theorem} and conclude that 
  \begin{itemize}
    \item $|c(1,H) |< 2 \Vert sf \Vert_{\infty}, \text{ if } \lambda \leq 0,$
    \item $|c(1,H) |< 2 \Vert sf \Vert_{\infty} + n \lambda, \text{ if } \lambda > 0.$  
  \end{itemize} 
   The above inequalities hold for all $s \in (\frac{2}{C_1} d_{C^0}(Id, \phi^1_H), r]$.  Therefore,
   
  \begin{itemize}
    \item $|c(1,H) |< 2 \Vert \frac{2}{C_1} d_{C^0}(Id, \phi^1_H) f \Vert_{\infty} , \text{ if } \lambda \leq 0,$
    \item $|c(1,H) |< 2 \Vert \frac{2}{C_1} d_{C^0}(Id, \phi^1_H) f \Vert_{\infty} + n \lambda, \text{ if } \lambda > 0.$  
  \end{itemize} 
  The result follows, with $C:= 2 \frac{2}{C_1} \Vert f \Vert_{\infty} $ and $\delta := \frac{C_1 r}{2}$.
  
  Now, we consider general $G\in C^{\infty}_c([0,1] \times  (M \setminus U))$ and $a \in QH^*(M) - \{0\}$.  Recall that $d_{C^0}(\phi^1_G, \phi^1_H) = d_{C^0}(Id, \phi^{-1}_G \phi^1_H)$.  From the above we conclude that if $d_{C^0}(\phi^1_G, \phi^1_H) < \delta$ then   
  \begin{itemize}
    \item $c(1,\bar{G} \# H) < C \; d_{C^0}(\phi^1_G, \phi^1_H) , \text{ if } \lambda \leq 0,$
    \item $c(1,\bar{G} \# H) < C \; d_{C^0}(\phi^1_G, \phi^1_H) + n \lambda, \text{ if } \lambda > 0.$  
  \end{itemize} 
  
  From the triangle inequality for spectral invariants we get that $c(a,H) - c(a, G) \leq c(1,\bar{G} \# H)$, which combined with the above inequalities gives us 
  \begin{itemize}
    \item $c(a,H) - c(a, G) < C \; d_{C^0}(\phi^1_G, \phi^1_H) , \text{ if } \lambda \leq 0,$
    \item $c(a,H) - c(a, G) < C \; d_{C^0}(\phi^1_G, \phi^1_H) + n \lambda, \text{ if } \lambda > 0.$  
  \end{itemize} 
     
  Similarly, we get the same inequalities for $c(a,G) - c(a, H)$.  The result then follows.
  \end{proof}
  
  We will now provide a proof for Theorem \ref{Epsilon Shift Theorem}.    
  \begin{proof} [Proof of Theorem \ref{Epsilon Shift Theorem}]
  
  We may assume, by slightly $C^{\infty}$-perturbing $f$, that it is Morse.  Let $Crit(f)$ and $Fix(\phi^1_f)$ denote the set of critical points of $f$ and fixed points of $\phi^1_f$, respectively.  Since $f$ is slow we have $Crit(f)  = Fix(\phi^1_f)$.  Let $supp(H)$ denote the support of $H$ and set $B = M \setminus supp(H)$.  The assumption that $supp(H)$ is $\epsilon$-shifted by $\phi^1_f$ implies that $Fix(\phi^1_f) \subset B$.
  
  We claim that $Fix(\phi^1_H \circ \phi^1_f) = Fix(\phi^1_f)$.  Clearly,  $Fix(\phi^1_f) \subset Fix(\phi^1_H \circ \phi^1_f)$ because  $Fix(\phi^1_f) \subset B$.  For the other containment suppose that $p \in Fix(\phi^1_H \circ \phi^1_f)$.  First, note that if $p \in supp(H)$ then we would have $d(p, \phi^1_f(p)) \geq \epsilon > d_{C^0}(Id, \phi^1_H)$ and so $\phi^1_H$ can not move $\phi^1_f(p)$ back to $p$.  Thus, we see that $p \notin supp(H)$.  Next, observe that $\phi^1_f(p) \notin supp(H)$: if $\phi^1_f(p)$ were in $supp(H)$ then we would get $p = \phi^1_H \phi^1_f(p) \in Supp(H)$.  It then follow that $p = \phi^1_H \phi^1_f(p) = \phi^1_f(p)$, and thus  $Fix(\phi^1_H \circ \phi^1_f) \subset Fix(\phi^1_f)$.  
  
  The above implies that $$Spec(H \# f)
   = \{ \mathcal{A}_{H \# f}([p, A]): p \in Crit(f), \; A \in \Gamma \}$$ $$ = \{ f(p) - \omega(A) : p \in Crit(f), \;  A \in \Gamma \},$$
  where the second equality follows from the fact that $Crit(f) \subset B$.  Now, observe that the Hamiltonian $\phi^1_H\phi^1_f$ is non-degenerate because it coincides with $\phi^1_f$ on a neighborhood of its fixed points, and so $c(1, H \# f)$ is attained by a periodic orbit of Conley-Zehnder index $2n$.  Here, we have used the fact that $\Phi_{pss}$ is an isomorphism between $QH^0(M)$ and $HF_{2n}(H \# f)$.  
  
  It follows from the above that there exist $p \in Crit(f)$ and $A \in \Gamma$ such that $$c(1, H \# f) = f(p) - \omega(A) \; \; \text{and } \mu_{cz}([p, A]) = 2n.$$  Recall, from Section \ref{Spectral Invariants}, that $\mu_{cz}([p, A]) = i_{Morse}(p) - 2c_1(A)$ and therefore,
  $c_1(A) = \frac{i_{Morse}(p) - 2n}{2}$.  Since $0 \leq i_{Morse}(p) \leq 2n$ we conclude that 
  \begin{equation} \label{bound on c_1}
     -n \leq c_1(A) \leq 0.
  \end{equation}
  
  We finish the proof by considering the following two cases:
  
  \noindent \textbf{Case 1: $\lambda \leq 0$.} We have $c(1, H \# f) = f(p) - \omega(A) = f(p) -  \lambda c_1(A) \leq f(p) \leq \Vert f \Vert_{\infty}$; here we have used the right hand side of (\ref{bound on c_1}).  Using the triangle inequality, we get $c(1, H ) \leq c(1, H \# f) + c(1, -f) \leq 2 \Vert f \Vert_{\infty}.$  Now, note that we can repeat all of the above for the Hamiltonian $\bar{H}$ and so we $c(1, \bar{H}) \leq 2 \Vert f \Vert_{\infty}.$  Finally, using the fact that $ 0 \leq c(1, \bar{H}) + c(1, H)$, we obtain $$|c(1, H)| \leq 2 \Vert f \Vert_{\infty}.$$
  
  \noindent \textbf{Case 2: $\lambda > 0$.}  $c(1, H \# f) = f(p) - \omega(A) = f(p) -  \lambda c_1(A) \leq f(p) + n \lambda \leq \Vert f \Vert_{\infty} + n \lambda$; here we have used the left hand side of (\ref{bound on c_1}).  Now, using the argument from Case 1 we obtain:   $$|c(1, H)| \leq 2 \Vert f \Vert_{\infty} + n \lambda.$$
  \end{proof}

  \section{Proofs of applications to Hofer geometry} \label{Proofs of applications to Hofer geometry}
  In this section we prove Theorems  \ref{Th: infiniteness of diameter} and \ref{Th: Le Roux's question}. 
  
  The statements of Theorems \ref{Th: infiniteness of diameter} and \ref{Th: Le Roux's question} require the closure of $U$ to be stably displaceable.  However, it can be extracted from the following proofs that the above theorems hold under less restrictive conditions on $U$; it is sufficient to require that $cl(U)$ is contained in an open set $V$ with the property that $c(1, \cdot)$ is bounded on $C^{\infty}_c(V)$. 
   The following lemma, which is due to Borman, will be needed in our proof:
   \begin{lem}\label{Strom's Lemma}(Borman \cite{Bor10})
   Let $V \subset M$ denote a stably displaceable set, and suppose that $F \in C^{\infty}_c([0,1]\times V)$.  Then, there exists a constant $E$, depending on $V$, such that $|c(1, F)| \leq E$.
   \end{lem}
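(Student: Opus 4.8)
The plan is to first reduce the two–sided bound to a one–sided one, and then to establish the one–sided bound by stabilizing $V$ into a \emph{genuinely} displaceable subset of an auxiliary closed manifold and transporting the resulting estimate back to $M$ by symplectic reduction. For the first reduction I would use the triangle inequality and normalization from Proposition \ref{Properties of Spectral Invariants}. A direct computation shows that $F \# \bar{F}$ is the zero Hamiltonian, so $0 = c(1, F\#\bar{F}) \leq c(1,F) + c(1,\bar{F})$. Since $\bar{F}(t,x) = -F(t, \phi^t_F(x))$ is again compactly supported in $V$, it suffices to produce a constant $E = E(V)$ with $c(1,F) \leq E$ for \emph{every} $F \in C^{\infty}_c([0,1]\times V)$: applying this to $\bar{F}$ gives $c(1,\bar{F}) \leq E$, whence $c(1,F) \geq -c(1,\bar{F}) \geq -E$.

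Next I would recall the displaceable prototype: if $W$ is genuinely displaceable, then $|c(1,G)| \leq E(W)$ uniformly over all $G$ supported in $W$ (this is the standard displacement–energy estimate for spectral invariants, applied to $G$ and to $\bar{G}$ via the same triangle–inequality trick as above). To bring stable displaceability into play I would stabilize. Compactify the cylinder $T^*S^1 = S^1 \times \mathbb{R}$ to a sphere $S^2$ of large area $A$, sending the zero section to the equator $E$, and work inside the closed manifold $M \times S^2$. The Hamiltonian displacing $V \times S^1$ in $M \times T^*S^1$ is compactly supported in the fibre direction, so for $A$ large it also displaces $V \times E$ inside $M \times S^2$; by openness and compactness a whole thin band $V \times B$ around the equator is then displaceable there, and the displaceable prototype bounds $c_{M\times S^2}(1,\cdot)$ uniformly on Hamiltonians supported in $V \times B$.

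It remains to relate spectral invariants on $M$ to those on $M\times S^2$. Rotation of the $S^2$ factor is a Hamiltonian circle action on $M \times S^2$ whose moment map $\mu$ is a multiple of the height function; its zero level is $M \times E$, whose reduced space is canonically $M$. I would take the invariant lift $\tilde{F}(x,y) = F(x)\,\chi(\mu(y))$, with $\chi$ a bump equal to $1$ near $0$ and supported so narrowly that $\tilde{F}$ lives in the displaceable band $V \times B$, and then invoke a reduction estimate comparing $c_{M\times S^2}(1,\tilde{F})$ with $c_M(1,F)$ up to an additive constant determined by $\chi$ and $A$ alone. Combined with the displaceable bound from the previous step this yields $c_M(1,F) \leq E$, and the first step upgrades this to $|c(1,F)| \leq E$.

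The main obstacle is exactly this last transfer. The naive lift $F(x)$, for which the product (Künneth) formula gives the clean identity $c_{M\times S^2}(1,F(x)) = c_M(1,F)$, is supported over the \emph{entire} sphere factor, i.e. in $\mathrm{supp}(F) \times S^2$, and such a set is never displaceable: stable displaceability only renders displaceable the codimension–one slice $V \times E$ over the equator. One is therefore forced to confine the relevant Floer generators to a neighbourhood of the reduced level $M \times E$ while keeping the spectral value under control, and carrying out this localization through the reduction of spectral invariants is the technical heart of the argument. The fact that such a reduction comparison may a priori hold only as a one–sided inequality is precisely what the passage to $\bar{F}$ in the first step is designed to absorb.
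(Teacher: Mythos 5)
Your skeleton assembles the right ingredients --- the reduction to a one-sided bound via $\bar{F}$, Ostrover's displacement-energy bound, passing to $M \times S^2$ where stable displaceability becomes genuine displaceability, and the K\"unneth-type identity $c_{M\times S^2}(1,\pi_1^*F) = c_M(1,F)$ --- but the step you yourself flag as ``the technical heart'' is a genuine gap, and it is precisely where the paper's argument does something different. There is no available reduction theorem comparing $c_{M\times S^2}(1, F\cdot\chi(\mu))$ with $c_M(1,F)$ up to a constant depending only on $\chi$ and the sphere area, and you cannot extract one from the known continuity properties: the cutoff lift differs from the full lift by $\Vert \pi_1^*F - F\cdot\chi(\mu)\Vert_{(1,\infty)} = \max|F| \cdot \max|1-\chi|$, which is of order $\max|F|$ and hence not uniform in $F$. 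Establishing such a localization result would be at least as hard as the lemma itself, so as written the proof does not close.

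The paper (following Borman \cite{Bor10}) sidesteps any reduction statement with a partition-of-unity trick that your setup is actually one step away from. Keep the \emph{full} lift $\pi_1^*F$, for which the identity $c_M(1,F) = c_{M\times S^2}(1,\pi_1^*F)$ holds exactly (Theorem 5.1 of \cite{EP09a}), and write $\pi_1^*F = \pi_1^*F\cdot\phi_0 + \pi_1^*F\cdot\phi_1 + \pi_1^*F\cdot\phi_2$, where $\phi_0$ is essentially your equatorial bump $\chi$ and $\phi_1, \phi_2$ are functions of the height $z$ supported in the northern and southern caps. Since each $\phi_i$ depends only on $z$ while $\pi_1^*F$ depends only on the $M$-factor, the three pieces pairwise Poisson-commute and their flows commute, so the sum coincides with the $\#$-composition and the triangle inequality yields $c(1,\pi_1^*F) \le \sum_{i} c(1,\pi_1^*F\cdot\phi_i)$. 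Each summand has displaceable support: the $\phi_0$-piece lies in a displaceable neighbourhood of $V \times S^1$ (your band), and each cap piece lies over a disc of area less than half the sphere, displaceable by a rotation lifted from the $S^2$ factor. Ostrover's argument then bounds each term by the corresponding displacement energy, and your first step upgrades the resulting one-sided bound to $|c(1,F)| \le E$. In short, your $\tilde{F}$ is exactly the $\phi_0$-piece; the fix is not to compare $c(1,\tilde{F})$ with $c_M(1,F)$, but to add back the two cap pieces, which restores the exact K\"unneth identity at the cost of only two extra displacement-energy terms in the constant.
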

   We will provide a proof for this lemma at the end of this section.
  
  \begin{proof}[Proof of Theorem \ref{Th: infiniteness of diameter}] 
 First, we show that $Ham_c(M\setminus U)$ has infinite diameter. By Corollary \ref{Cor: descent of spectral invariants} it is sufficient to show that there exists $H \in C^{\infty}_c([0,1] \times (M \setminus U))$ with arbitrarily large $c(1,H)$: indeed Corollary \ref{Cor: descent of spectral invariants}, combined with $L^{(1, \infty)}$ continuity of spectral invariants, implies that 
  \begin{itemize}
  \item $c(1,H) \leq \Vert \phi^1_H \Vert_{Hofer}$ if $\lambda \leq 0$, and
  \item $c(1,H) - n \lambda \leq \Vert \phi^1_H \Vert_{Hofer}$ if $\lambda > 0$.
  \end{itemize}
  
   $cl(U)$ is compact and stably displaceable, and thus there exists a stably displaceable open set $V$ which contains $cl(U)$.  Let $F$ denote an autonomous Hamiltonian with the following properties:
  
  \begin{enumerate}
  \item $supp(F) \subset V,$
  \item $F(p) = -C$ for all $p \in U$, where $C$ denotes a large positive number. 
  \end{enumerate}
  
  It follows from Lemma \ref{Strom's Lemma} that $|c(1,F)| \leq E$, for some constant $E$.  Let $H = F + C$; note that $H \in C^{\infty}_c([0,1] \times (M \setminus U))$.  By the shift property of spectral invariants we have $$c(1,H) = c(1, F) + C \geq C - E.$$  It follows that $Ham_c(M \setminus U)$ has infinite diameter. 
  
   Now, suppose the $Cal$ descends to $Ham_c(M \setminus U)$.  By modifying $F$ on $V \setminus U$ we can ensure that $\int_{M}{F \omega ^n}  = -C \; Vol(M), $ where $ Vol(M)$ denotes the volume of $M$.  It then follows that 
   $$Cal(\phi^1_H) = \int_{M}{F+C \; \omega^n} + \int_{M} F \omega^n + C\; Vol(M) = 0.$$
  \end{proof}
  
  \begin{proof}[Proof of Theorem \ref{Th: Le Roux's question}]
  It follows from the above proof of Theorem \ref{Th: infiniteness of diameter} that there exists $H \in C^{\infty}_c([0,1]\times (M \setminus U))$ such that $c(1,H) >> A $. By Theorem \ref{Main Theorem} there exists a small $\delta \in \mathbb{R}$ such that
  if $d_{C^0}(\phi^1_G, \phi^1_H) < \delta$ then 
  \begin{itemize}
  \item $c(1,H) - \delta \leq c(1,G)$ if $\lambda \leq 0$, and 
  \item $c(1,H) -\delta - n \lambda \leq c(1,G)$ if $\lambda >0$.
  \end{itemize}
  
   \noindent By Corollary \ref{Cor: descent of spectral invariants},  we have 
  \begin{itemize}
  \item  $c(1,G) \leq \Vert \phi^1_G \Vert_{Hofer}$ if $\lambda \leq 0$, and
  \item  $c(1,G) - n \lambda \leq \Vert \phi^1_G \Vert_{Hofer}$ if $\lambda > 0$. 
  \end{itemize}
  Combining the above inequalities we get:
  \begin{itemize}
  \item $c(1,H) - \delta \leq \Vert \phi^1_G \Vert_{Hofer} $ if $\lambda \leq 0$, and 
  \item $c(1,H) -\delta - 2n \lambda \leq \Vert \phi^1_G \Vert_{Hofer}$ if $\lambda >0$,
  \end{itemize}
 
  \noindent Because $c(1,H) >> A$, it follows that the $C^0$ open ball of radius $\delta$ centered at $\phi^1_H$ is contained in $\mathcal{E}_A(M \setminus U)$.
   \end{proof}  
   We end this section by proving Lemma \ref{Strom's Lemma}.  We thank Strom Borman for showing us the proof of this lemma.  The argument presented here, closely follows the proof of Theorem 2 from \cite{Bor10}.  
   \begin{proof}[Proof of \ref{Strom's Lemma}]
   As argued in \cite{Bor10}, we may assume that $V \times S^1$ is displaceable in $M \times S^2$, where $S^2$ is a round sphere in $\mathbb{R}^3$ with coordinates $(x,y,z)$ and equipped with the induced area form, and $S^1$ is the equatorial circle given by $z=0$.  Denote by $\pi_1: M \times S^2 \rightarrow M$ the standard projection.  In \cite{Bor10}, Borman constructs an open covering of $M \times S^2$, $\{\mathcal{U}_0, \mathcal{U}_1, \mathcal{U}_2\}$, which admits a subordinate partition of unity $\{\phi_0, \phi_1, \phi_2\}$ with the property that $\phi_i \# \phi_j = \phi_i + \phi_j$, and 
   \begin{equation} \label{eq: Poisson Commutation}
   \pi_1^*F \cdot \phi_i \; \# \; \pi_1^*F \cdot \phi_j = \pi_1^*F \cdot \phi_i +  \pi_1^*F \cdot \phi_j. 
   \end{equation}
   Furthermore, Borman's construction ensures that $\mathcal{U}_1$ and  $\mathcal{U}_2$ are displaceable, and although $\mathcal{U}_0$ is not displaceable, the support of $\pi_1^*F \cdot \phi_0$ is contained in a displaceable neighborhood of $V \times S^1$.  Hence, we see that the functions $\pi_1^*F \cdot \phi_i$ have displaceable supports. 
   
      Denote by $1_{\mathrm{M}} \in QH^0(M)$ and $1_{\mathrm{M \times S^2}} \in QH^0(M\times S^2)$ the identity cohomology classes.  Now, by Theorem 5.1 of \cite{EP09a} we have: $c(1_{\mathrm{M}}, F)   =  c(1_{\mathrm{M \times S^2}}, \pi_1^* F).$ Also, Equation (\ref{eq: Poisson Commutation}) implies
      $$c(1_{\mathrm{M \times S^2}}, \pi_1^* F) = c(1_{\mathrm{M \times S^2}}, \Sigma_{i=0}^2 \pi_1^*F \cdot \phi_i)$$
      $$= c(1_{\mathrm{M \times S^2}}, \pi_1^*F  \cdot \phi_0 \; \# \;\pi_1^*F \cdot \phi_1 \; \# \;\pi_1^*F \cdot \phi_2) \leq \Sigma_{i=0}^2 c(1_{\mathrm{M \times S^2}}, \pi_1^*F \cdot \phi_i).$$
          It follows from a well known argument, due to Ostrover \cite{Os03}, that $c(1_{\mathrm{M \times S^2}}, \pi_1^*F \cdot \phi_i) \leq e_i,$ where $e_i$ denote the displacement energy of the support of $\pi_1^*F \cdot \phi_i$.  Hence, it follows that $c(1_{\mathrm{M}}, F) \leq E$,  where $E = \Sigma_{i=0}^2 e_i.$  
       Clearly, the above argument implies that $c(1_{\mathrm{M}}, \bar{F}) \leq E$ as well.  Since $c(1_{\mathrm{M}}, \bar{F}) + c(1_{\mathrm{M}}, F) \geq 0$ we conclude that $|c(1_{\mathrm{M}}, F)| \leq E.$
   \end{proof}
  
\section{Extension to compact manifolds with boundary} \label{Extension}
Let $(X, \omega)$ denote a compact symplectic manifold with boundary.  We denote by $C^{\infty}_c([0,1] \times X)$ the set of Hamiltonians which vanish near $\partial X$, the boundary of $X$.    
    
    It has been shown by Frauenfelder and Schlenk \cite{FS07} that spectral invariants can be defined for $H \in C^{\infty}_c([0,1] \times X)$ if $(X, \omega)$ satisfies certain technical conditions: $\partial X$ must be convex and $\omega$ must satisfy a semi-positivity condition, see \cite{FS07, La11} for details.  $\partial X$ is said to be convex if there exists an outward pointing vector field $V$ along $\partial X$, called the Liouville vector field, such that $\mathcal{L}_V \omega= \omega$ near $\partial X$.  With regards to the semi-positivity condition we only mention that if $[\omega]|_{\pi_2} = \lambda c_1|_{\pi_2}$ for some $\lambda \geq 0$ then the semi-positivity condition required in \cite{FS07, La11} are satisfied, but not if $\lambda < 0$.  Examples of manifolds satisfying all the required technical conditions of \cite{FS07, La11} include $X = T^*_rN$, the cotangent ball bundle of radius $r$ over a closed manifold $N$, and Stein domains.  
    
   Suppose that $X^{2n}$ is a manifold with boundary satisfying all the technical conditions needed to define spectral invariants.  Let $c(a,H)$ denote the spectral invariant associated to $H \in C^{\infty}_c([0,1] \times X)$ and the quantum cohomology class $a$.  (As pointed out by Lanzat in \cite{La11}, there are two sets of possibilities for $a$; it could be an absolute or a relative (to boundary) quantum cohomology class.  Hence, one obtains two sets of spectral invariants for each Hamiltonian).  These spectral invariants satisfy all the standard properties listed in Section \ref{Spectral Invariants}.   
   
   \subsection{Theorem \ref{Main Theorem} on compact manifolds with boundary} 
   Suppose that $[\omega]|_{\pi_2} = \lambda c_1|_{\pi_2}$.  If $\lambda = 0$ then spectral invariants descend to $Ham_c(X)$; see \cite{FS07}.  If $\lambda >0$ then spectral invariants are defined on the universal cover of $Ham_c(X)$.  Our proof of Theorem \ref{Main Theorem} carries over to establish the following:
   
   \begin{thm} \label{Main Theorem non-compact}
   There exist constants $C, \delta > 0$, depending on $X$, such that for any quantum cohomology class $a$ and any Hamiltonians $H, G \in C^{\infty}_c([0,1]\times X)$ if $d_{C^0}(\phi^1_H, \phi^1_G) \leq \delta$, then
   \begin{equation}\label{Main Theorem for Non-closed Manifolds}
   |c(a,G) - c(a,H)| \leq C \, d_{C^0}(\phi^1_G, \phi^1_H) + n \lambda.
   \end{equation}
   \end{thm}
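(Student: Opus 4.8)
The plan is to transport the proof of Theorem~\ref{Main Theorem} line by line, using that all the formal properties of spectral invariants collected in Proposition~\ref{Properties of Spectral Invariants} remain valid for $H \in C^{\infty}_c([0,1]\times X)$ by the cited work of Frauenfelder--Schlenk and Lanzat. First I would reduce to the normalized case exactly as before: since $d_{C^0}$ is left-invariant and $d_{C^0}(\phi^1_G,\phi^1_H) = d_{C^0}(\Id,(\phi^1_G)^{-1}\phi^1_H)$, writing $H = G \# (\bar G\# H)$ and $a = a*1$ gives $c(a,H)-c(a,G)\le c(1,\bar G\#H)$ from the triangle inequality (and symmetrically for the other direction), so it suffices to bound $|c(1,F)|$ for a single compactly supported $F = \bar G\# H$ with $d_{C^0}(\Id,\phi^1_F)$ small. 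I would also record at the outset that the semi-positivity hypothesis needed to define $c$ on $X$ forces $\lambda \ge 0$; consequently only the single estimate with additive term $n\lambda$ must be established. When $\lambda = 0$ it reduces to the statement with no additive term, and when $\lambda > 0$ it is exactly case~(2) of Theorem~\ref{Main Theorem}.

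The heart of the argument is the analogue of Theorem~\ref{Epsilon Shift Theorem}, and I expect its Floer-theoretic content to carry over verbatim. Given a slow Morse function $f$ whose time-one flow $\epsilon$-shifts $\supp(F)$, the same elementary argument shows $Fix(\phi^1_F\circ\phi^1_f) = Fix(\phi^1_f) = Crit(f)$, all of these points lying off $\supp(F)$, so that
$$Spec(F\# f) = \{\, f(p) - \omega(A) : p\in Crit(f),\ A\in\Gamma \,\}.$$
Because $\phi^1_F\circ\phi^1_f$ coincides with $\phi^1_f$ near its fixed points it is nondegenerate, and since the PSS isomorphism identifies $QH^0(X)$ with $HF_{2n}(F\#f)$, spectrality provides $p\in Crit(f)$ and $A\in\Gamma$ with $c(1,F\#f) = f(p)-\omega(A)$ and $\mu_{cz}([p,A]) = 2n$. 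The recapping formula $\mu_{cz}([p,A]) = i_{Morse}(p) - 2c_1(A)$ together with $0\le i_{Morse}(p)\le 2n$ yields $-n\le c_1(A)\le 0$, whence monotonicity $\omega(A)=\lambda c_1(A)$ and $\lambda\ge 0$ give $f(p)-\omega(A)\le \Vert f\Vert_\infty + n\lambda$. Two applications of the triangle inequality (with $-f$, and then with $\bar F$) produce $|c(1,F)|\le 2\Vert f\Vert_\infty + n\lambda$, and the linear rescaling $f\mapsto sf$ converts this into $|c(1,F)|\le C\,d_{C^0}(\Id,\phi^1_F)+n\lambda$ exactly as in the proof of Theorem~\ref{Main Theorem}. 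None of these steps sees the boundary: all the relevant orbits sit over $Crit(f)$, away from $\supp(F)$, and every property invoked is one guaranteed on $X$ by Frauenfelder--Schlenk and Lanzat.

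The one step that genuinely requires new input --- and the step I expect to be the \textbf{main obstacle} --- is the construction of the slow shifting function, now adapted to the convex boundary and, crucially, producing constants $C,\delta$ that depend only on $X$. In the closed case the critical points of $f$ were parked inside the fixed hole $U$; here there is no such hole, so I would instead push $Crit(f)$ toward $\partial X$ and arrange that $X_f$ is bounded below in norm on the region where compactly supported Hamiltonians live, so that $\phi^s_f$ shifts that region by an amount comparable to $s$ while $\Vert sf\Vert_\infty$ grows linearly. For $X=T^*_rN$ the difficulty is concentrated at the zero section, where any function of the radial coordinate is critical along $N$; I would circumvent this with a function of the form $f = \langle p, W\rangle + g(q)$, choosing a vector field $W$ on $N$ and a function $g$ whose critical set is disjoint from the zeros of $W$, and taking the radius small, so that the vertical and horizontal parts of $df$ never vanish simultaneously and $X_f$ is uniformly bounded below. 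One must finally check that such an $f$ can be taken slow (no nonconstant contractible periodic orbits of period at most one) and Morse after an arbitrarily small perturbation; granting this, the scaling argument closes exactly as in Section~\ref{Proof of Main Theorem}.
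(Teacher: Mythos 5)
Your reduction to bounding $|c(1,F)|$ via the triangle inequality, your transfer of the $\epsilon$-shift theorem, and your observation that semi-positivity forces $\lambda\ge 0$ all match the paper. You have also correctly isolated the genuinely new step: constructing the slow shifting Hamiltonian with constants depending only on $X$. But your resolution of that step fails. Since the supports of Hamiltonians in $C^{\infty}_c([0,1]\times X)$ exhaust the interior of $X$, uniformity of $C$ and $\delta$ forces $\Vert X_f\Vert$ to be bounded below on all of $\mathrm{int}(X)$, hence on $X$; so your $f$ (e.g.\ $\langle p, W\rangle + g(q)$ on $T^*_rN$) has no critical points in $X$ at all. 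This destroys the $\epsilon$-shift argument rather than completing it: if $sf$ is slow and $\phi^s_f$ shifts $\supp(F)$, then $\phi^1_F\circ\phi^s_f$ has no fixed points in $X$ whatsoever, so the spectrum you compute ``over $Crit(f)$'' is empty inside $X$, while $HF_*(F\# sf)\cong QH^*(X)\neq 0$ guarantees that $1$-periodic orbits do exist --- they necessarily live on the cylindrical end $\partial X\times[0,\infty)$ of the completion $\hat{X}$ on which the Frauenfelder--Schlenk/Lanzat invariants are actually defined. Your proposal never extends $f$ to that end, never checks that $F\# sf$ is admissible there, and never controls the actions and Conley--Zehnder indices of the end orbits, which is where the spectral value $c(1,F\# sf)$ is in fact attained. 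The sentence ``none of these steps sees the boundary: all the relevant orbits sit over $Crit(f)$'' is exactly backwards for your $f$: all the relevant orbits sit outside $X$.

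The paper closes this gap with precisely the device your construction is missing: it attaches a collar, $Y = X\cup_{\partial X\times\{0\}}\partial X\times[0,1]$, checks that the spectral invariants of $H\in C^{\infty}_c([0,1]\times X)$ are the same whether computed on $X$ or on $Y$, and then runs the closed-manifold proof of Theorem \ref{Main Theorem} on $Y$ with $\partial X\times(0,1)$ playing the role of the fixed hole $U$: the shifting function is a $C^{\infty}$-small (hence slow) time-independent Morse-type Hamiltonian on $Y$ whose critical points all lie in the collar, so it shifts $X$ uniformly while its periodic orbits --- the generators that carry the spectral value --- sit in a region where their values and indices are under control, exactly as in Theorem \ref{Epsilon Shift Theorem}. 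To salvage your intrinsic construction you would in effect have to re-introduce this collar to house $Crit(f)$ and to prove the $X$-versus-$Y$ invariance of the spectral invariants; at that point the function $\langle p,W\rangle+g(q)$ is superfluous, since any $C^{\infty}$-small Morse function on $Y$ with critical points parked in the collar does the job.
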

   
   \noindent It then follows that spectral invariants descend ``up to a constant" on positively monotone $X$ and hence asymptotic spectral invariants always descend.  Observe that, although $H$ and $G$ vanish near $\partial X$, they are not required to vanish on any fixed open set.  
   
   \begin{proof} [Proof of Theorem \ref{Main Theorem non-compact}]
   The proof of this theorem is very similar to that of Theorem \ref{Main Theorem} and thus we will not provide it in detail.  Here, we will only explain why it is not necessary to require that $H$ and $G$  vanish on any fixed open subset of $X$:  to construct spectral invariants, the authors of \cite{FS07, La11} extend $(X, \omega)$ to an open symplectic manifold $(\hat{X}, \hat{\omega})$, where $$\hat{X} = X \cup_{\partial X \times \{0\}} \partial X \times [0,\infty) \text{ and } \hat{\omega} = \left\{ \begin{array} {ll} 
                                         \omega  & \mbox{on $X$};\\
                                          d(e^r \alpha)& \mbox{on $[0, \infty) \times \partial X$}.\end{array} \right. $$
   
   \noindent Here, $r$ denotes the coordinate on $[0, \infty)$ and $\alpha= \iota_V \omega.$  Hamiltonian Floer theory is then carried out for so-called \emph{admissible Hamiltonians}; we emphasize that \emph{admissible Hamiltonians} are Hamiltonians on $\hat{X}$ and can be non-zero on  $\partial X \times [0,\infty)$.  Spectral invariants are then constructed for \emph{admissible Hamiltonians} as in Section \ref{Spectral Invariants}.   Elements of $C^{\infty}_c([0,1] \times X)$, viewed as Hamiltonians vanishing on $\partial X \times [0,\infty)$, are admissible and hence one can associate spectral invariants to them.
   
   Now, let $Y = X \cup_{\partial X \times \{0\}} \partial X \times [0,1]$.  We view $Y$ as a symplectic manifold with boundary; the symplectic form is taken to be $\hat{\omega}|_Y$. As explained in the previous paragraph, spectral invariants can be constructed for elements of $C^{\infty}_c([0,1] \times Y)$.  Suppose that $H \in C^{\infty}_c([0,1] \times X) \subset C^{\infty}_c([0,1] \times Y)$.  It can be checked that spectral invariants of $H$ are independent of whether $H$ is viewed as a Hamiltonian on $X$ or $Y$. Hence, we will view elements of $C^{\infty}_c([0,1] \times X)$ as functions on $Y$ that vanish on $\partial X \times (0,1)$ and the role of the open set $U$ from Theorem \ref{Main Theorem} will be played by $\partial X \times (0,1)$.  The set $X$ can be $\epsilon$-shifted inside $Y$ in the same way that $M\setminus U$ was $\epsilon$-shifted inside $M$ in the proof of Theorem \ref{Main Theorem}: using a $C^{\infty}$-small time-independent Hamiltonian whose critical points are all contained in $\partial X \times (0,1).$  The rest of the proof parallels the proof of Theorem \ref{Main Theorem}.  
   \end{proof}
  
  \subsection{Infiniteness of Hofer's diameter and Le Roux's question on compact manifolds with boundary}   
   Theorem \ref{Main Theorem non-compact} implies the following:
   Suppose that there exists $H \in C^{\infty}_c([0,1]\times X)$ with arbitrarily large spectral invariants.  Then,
   
    \begin{enumerate}
   \item  $Ham_c(X)$ has infinite Hofer diameter. 
   \item If the Calabi homomorphism descends to $Ham_c(X)$, the kernel of $Cal$ has infinite Hofer diameter. 
   \item Le Roux's question is answered affirmatively, i.e., $\mathcal{E}_A(X)$ has non-empty $C^0$ interior for any value of $A>0$.
   \end{enumerate}
   
   Proofs of the above facts are omitted because of their similarity to those of Theorems \ref{Th: infiniteness of diameter} and \ref{Th: Le Roux's question}.  
   
   Spectral invariants are unbounded on many manifolds; for instance this is true, as explained in Example 5.8 of \cite{Sc00}, if $X$ contains a Lagrangian $L$ such that $\pi_1(L)$ embeds into $\pi_1(X)$ and $L$ admits a Riemannian metric with no non-constant contractible geodesics.  
  
\printbibliography

\end{document}